\title{Betti numbers of Shimura curves and arithmetic three--orbifolds}
\author{Miko\l{}aj Fr\k{a}czyk and Jean Raimbault}
  \address{Alfr\'ed R\'enyi Institute of Mathematics,  Re\'altanoda utca 13-15, H-1053, Budapest, Hungary}
  \email{fraczyk@renyi.hu}
  \address{Institut de Math\'ematiques de Toulouse ; UMR5219 \\ Universit\'e de Toulouse ; CNRS \\ UPS IMT, F-31062 Toulouse Cedex 9, France}
  \email{Jean.Raimbault@math.univ-toulouse.fr}
\thanks{
The writing of this paper was facilitated by a visit of the second author to the R\'enyi Institute, funded by ERC Consolidator Grant 648017 to Mikl\'os Ab\'ert. The second author also benefited from support from the grant ANR-16-CE40-0022-01 - AGIRA. First author was supported by ERC Consolidator Grant 648017.}
\newcommand{\sub}{\mathrm{Sub}}
\newcommand{\eps}{\varepsilon}
\newcommand{\ovl}[1]{\overline #1}
\newcommand{\pl}{\partial}
\newcommand{\bs}{\backslash}
\newcommand{\vol}{\operatorname{vol}}
\newcommand{\tr}{\operatorname{tr}}
\newcommand{\otr}{\operatorname{Tr}}
\newcommand{\PSL}{\mathrm{PSL}}
\newcommand{\PU}{\mathrm{PU}}
\newcommand{\PGL}{\mathrm{PGL}}
\newcommand{\SU}{\mathrm{SU}}
\newcommand{\SO}{\mathrm{SO}}
\newcommand{\N}{\mathbf{N}}
\newcommand{\abs}{\mathrm{abs}}
\newcommand{\isom}{\mathrm{Isom}}
\newcommand{\NN}{\mathbb N}
\newcommand{\CC}{\mathbb C}
\newcommand{\RR}{\mathbb R}
\newcommand{\ZZ}{\mathbb Z}
\newcommand{\HH}{\mathbb H}
\newcommand{\QQ}{\mathbb Q}
\newcommand{\showcomments}{yes}
\newsavebox{\commentbox}
\newenvironment{comment}%
\numberwithin{equation}{section}
\newtheorem{theo}{Theorem}[section]
\newtheorem{lem}[theo]{Lemma}
\newtheorem{prop}[theo]{Proposition}
\theoremstyle{definition}
\newtheorem{theostar}{Theorem}
\begin{document}

\maketitle

\begin{abstract}
  We show that asymptotically the first Betti number $b_1$ of a Shimura curve satisfies the Gauss--Bonnet equality $2\pi(b_1 - 2) = \vol$ where $\vol$ is hyperbolic volume; equivalently $2g - 2 = (1+o(1))\vol$ where $g$ is the arithmetic genus. We also show that the first Betti number of a congruence hyperbolic 3--orbifolds asymptotically vanishes relatively to hyperbolic volume, that is $b_1/\vol \to 0$. This generalises results from \cite{7samurai} and \cite{fraczyk} and we rely on results and techniques from these works, most importantly the notion of Benjamini--Schramm convergence of locally symmetric spaces. 
\end{abstract}

\section{Introduction}

\subsection{Benjamini--Schramm convergence}

Let $G$ be a semisimple Lie group, $K \subset G$ a maximal compact subgroup and $X = G/K$ the associated symmetric space. Benjamini--Schramm convergence of locally symmetric orbifolds $\Gamma \bs X$ of finite volume was introduced in \cite{7samurai}. The Benjamini--Schramm convergence of a sequence of finite volume locally symmetric spaces $(\Gamma_i\bs X)_{i\in\mathbb N}$ to the symmetric space $X$ is equivalent to the following simple geometric condition:   
\begin{equation} \label{BS_symmetric}
  \forall R>0,\, \lim_{i\to\infty}\frac{\vol((\Gamma_i \bs X)_{<R})}{\vol(\Gamma_i \bs X)}=0, 
\end{equation}
where $M_{<R}$ denotes the $R$-thin part of a Riemannian orbifold $M$ (which we take to include the full singular set, see \eqref{def_thinpart} below). 

In addition to $X$ there are other possible limits in the Benjamini-Schramm topology. In order to describe them it is convenient to pass to the language of invariant random subgroups (IRS) of the group $G$. These are the Borel probability measures on the Chabauty space $\sub_G$ of closed subgroups which are invariant under conjugation by elements of $G$. For every lattice $\Gamma$ of $G$  there is a unique $G$-invariant probability measure on $G/\Gamma$ and its pushforward by the map $g\Gamma \mapsto g\Gamma g^{-1}$ gives an IRS denoted $\mu_\Gamma$. It was observed in \cite{7samurai} that $(\Gamma_i\bs X)$ converges to $X$ if and only if $\mu_{\Gamma_i}$ converge weakly-* to the trivial IRS $\delta_{\{1\}}.$ In general a sequence $(\Gamma_i\bs X)$ converges Benjamini-Schramm if and only if $\mu_\Gamma$ converges weakly-* to some IRS $\nu$. The limit IRS $\nu$ is always supported on discrete subgroups and the Benjamini-Schramm limit is the random locally symmetric  space $X/\Lambda$ where $\Lambda$ is a $\nu$-random subgroup of $G$.

It was proven in \cite{7samurai}, as a consequence of the Nevo--Stück--Zimmer theorem, that if $G$ is semisimple of higher rank, with all factors having property (T) then any sequence of irreducible locally symmetric spaces converges in the Benjamini--Schramm sense to $X$. This was extended to all nontrivial products in \cite{levit} (see also \cite{Matz} for more precise results in a very specific case). 

This statement is known to be false when $G = \SO(n, 1)$ or $\SU(n, 1)$, because in those cases there are lattices $\Gamma\subset G$ such that $H^1(\Gamma, \mathbb R)\neq 0$ (see \cite{Millson}, \cite{Li_Millson}, \cite{Kazhdan1977}). On the other hand restricting attention to the family of {\em arithmetic congruence lattices} in $G$ (see \ref{lattices} below for a short description) the first author proved in \cite{fraczyk} that for $G=\SO(2,1),\SO(3,1)$ the symmetric space $X = \HH^2, \HH^3$ is the only possible limit in the Benjamini-Schramm topology for a sequence of congruence lattices. Previously the second author \cite{raimbault} had proven a similar result for the family of non-uniform, not necessarily torsion-free lattices (nonuniformity makes them much easier to deal with algebraically). In this paper we remove the torsion-free hypothesis in general. 

\begin{theostar} \label{Main}
  If $G = \PGL_2(\RR)$ or $\PGL_2(\CC)$ and $\Gamma_n$ is a sequence of irreducible arithmetic lattices in $G$, which are either all congruence and pairwise distinct, or pairwise non-commensurable, then the sequence of locally symmetric spaces $\Gamma_n \bs X$ converges in the Benjamini--Schramm sense to $X$. 
\end{theostar}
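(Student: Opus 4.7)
My approach is to verify the geometric criterion \eqref{BS_symmetric}, namely that for each fixed $R>0$,
\[
\frac{\vol((\Gamma_n\bs X)_{<R})}{\vol(\Gamma_n\bs X)}\longrightarrow 0.
\]
The plan is to decompose the $R$-thin part, up to bounded multiplicity, according to which type of element of $\Gamma_n$ realises the small displacement at a point: Margulis tubes around closed geodesics of length $<2R$ (loxodromic contribution $M_n^{\lox}$), cusp neighborhoods (parabolic contribution $M_n^{\cusp}$, present only in the non-cocompact case), and $R$-neighborhoods of the singular locus (elliptic contribution $M_n^{\mathrm{ell}}$). Only the last is absent in the torsion-free setting, and it is where the real novelty lies: the loxodromic and parabolic parts have already been bounded in \cite{fraczyk} and \cite{raimbault} under the torsion-free hypothesis.

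For the congruence case I would first handle $M_n^{\lox}$ and $M_n^{\cusp}$ by passing to a torsion-free finite cover of uniformly bounded degree. After extracting a subsequence I may assume all $\Gamma_n$ lie in a fixed commensurability class; fix a maximal arithmetic lattice $\Gamma^{\mathrm{max}}$ in this class and a torsion-free principal congruence subgroup $\Gamma_0\subset\Gamma^{\mathrm{max}}$, and set $\Lambda_n:=\Gamma_n\cap\Gamma_0$. Then $\Lambda_n$ is torsion-free with $[\Gamma_n:\Lambda_n]\leq [\Gamma^{\mathrm{max}}:\Gamma_0]=:C$ uniformly bounded, so that the covering $\Lambda_n\bs X\to\Gamma_n\bs X$ has degree at most $C$ and Fraczyk's theorem applied to $(\Lambda_n)$ yields the required $o(\vol)$ estimate on $M_n^{\lox}\cup M_n^{\cusp}$. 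For the elliptic part, the key arithmetic input is that any torsion element of order $m$ in $\Gamma_n$ forces $\cos(2\pi/m)$ to lie in the trace field $k$, so the admissible orders form a finite set $S$ depending only on the commensurability class; as soon as the level $\mathfrak{n}_n$ of $\Gamma_n$ is coprime to $\prod_{m\in S}m$, every torsion conjugacy class of $\Gamma_n$ injects into the finite quotient $\Gamma^{\mathrm{max}}/\Gamma(\mathfrak{n}_n)$. A counting argument via Eichler's theory of optimal embeddings of cyclotomic orders into the relevant quaternion order should then bound the number of singular points, and hence $\vol(M_n^{\mathrm{ell}})$, by a quantity that grows strictly slower than the covolume.

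In the pairwise non-commensurable case, the finiteness theorem of Borel forces either $[k_n:\QQ]\to\infty$ or $\mathrm{disc}(k_n)\to\infty$. Lehmer--Dobrowolski-type lower bounds on the Mahler measures of traces of semisimple elements then push the systole of $\Gamma_n\bs X$ to infinity, so $M_n^{\lox}$ is eventually empty for fixed $R$; the same height argument applied to traces of torsion elements (which are algebraic integers in $k_n(\zeta_m)$) eliminates $M_n^{\mathrm{ell}}$, while $M_n^{\cusp}$ in the non-uniform subcase is handled by the adelic cusp analysis of \cite{raimbault}. The principal obstacle I anticipate is the quantitative estimate on the singular locus in the congruence case: one must bound the total Margulis-neighborhood volume of singular points in terms of class numbers of cyclotomic quadratic orders embedded in the quaternion algebra, and show that this total is dominated by the covolume. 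Adapting the trace-counting machinery of \cite{fraczyk} to this arithmetic setup is where I expect most of the technical work to be concentrated.
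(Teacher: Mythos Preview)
Your approach is quite different from the paper's, and it contains a genuine error in the non-commensurable case.

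The paper does \emph{not} attempt to control the elliptic or parabolic parts of the thin part at all. Instead it proves a soft criterion (Proposition~\ref{main_tech}): for lattices in a Gromov-hyperbolic $G$ with trivial elliptic radical, if the normalized sum of orbital integrals over \emph{hyperbolic} conjugacy classes tends to $0$, then any weak-$*$ limit $\mu$ of the IRSs $\mu_{\Gamma_n}$ almost surely contains no hyperbolic element; by Osin's theorem \cite{Osin} (Lemma~\ref{full_limit_set}) such a $\mu$ is then supported on the elliptic radical, hence trivial. The only arithmetic input needed is thus the estimate from \cite{fraczyk} on hyperbolic orbital integrals, which holds for arbitrary congruence lattices---no torsion-freeness is required at that point. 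For the non-commensurable case the paper simply passes to the maximal lattice $\Lambda_n\supset\Gamma_n$ (automatically congruence) and bounds the hyperbolic sum for $\Gamma_n$ by that for $\Lambda_n$. In particular all the work you anticipate on Eichler embeddings and counting singular points is bypassed entirely.

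Your non-commensurable argument is wrong as stated. Borel's finiteness theorem says covolumes go to infinity, not that the trace field degenerates: over a \emph{fixed} field $k$ (even $k=\QQ$ for $\PGL_2(\RR)$) there are infinitely many commensurability classes coming from quaternion algebras with different finite ramification sets, so one can have $[k_n:\QQ]$ and $\disc(k_n)$ bounded for all $n$. Consequently neither the Lehmer--Dobrowolski bound nor the height argument on traces of torsion elements forces $M_n^{\lox}$ or $M_n^{\mathrm{ell}}$ to be eventually empty; for instance, over $\QQ$ the trace $3$ occurs in infinitely many such algebras, giving a uniformly short closed geodesic. There is also a smaller issue in the congruence case: the bound $[\Gamma_n:\Lambda_n]\le[\Gamma^{\max}:\Gamma_0]$ presupposes $\Gamma_n\subset\Gamma^{\max}$, which fails in general since a commensurability class contains infinitely many maximal lattices. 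This is repairable (choose for each $n$ a principal congruence subgroup of a maximal lattice containing $\Gamma_n$, at a fixed level large enough to kill the uniformly bounded torsion), but the non-commensurable gap is not, and the elliptic counting you flag as ``most of the technical work'' is precisely what the paper's IRS argument is designed to avoid.
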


In \cite{fraczyk} the torsion free assumption was necessary because the methods only allowed to control the volume of the subset of thin part consiting of the collars of short geodesics. For a sequence of general arithmetic congruence orbifolds $(\Gamma_n\bs X)_{n\in\N}$ it could \textit{a priori} happen that the vast majority of the thin part comes from the cusps or the conical singularities so the sequence does not converge to $X$. Theorem \ref{Main} excludes this possibility. For the proof we use the estimates developped in \cite{fraczyk} to show that any weak-* limit of the sequence $\mu_{\Gamma_n}$ is supported on elementary subgroups. By \cite{Osin} the only IRS supported on this set is the trivial IRS, hence the theorem. We carry out the second step of this scheme of proof in detail in Proposition \ref{main_tech}, which is valid for all sequences of lattices in proper Gromov-hyperbolic spaces. 

We note that because we are using a soft method our approach does not indicate the rate of decay of $\vol((\Gamma_n\bs X)_{<R})/\vol(\Gamma_n\bs X)$ as opposed to \cite{fraczyk}.

%%%%%%%%%%%%%%%%%%%%%%%%%%%%%%

\subsection{Genus of Shimura curves}

One application of Theorem \ref{Main} is to determine the asymptotic genus of congruence surfaces of large volume. For compact surfaces without singularities the genus and volume are essentially linearly related by the Gauss-Bonnet formula. However for 2-orbifolds terms coming from cone points and cusps appear in the formula, and it is easy to see that there exists sequences of hyperbolic orbifolds with underlying space a sphere and volume going to infinity. This also has an algebraic interpretation: if $S$ is isomorphic as a Riemann surface to the $\CC$-points of an algebraic variety defined over a number field, which is the case for orbifolds obtained from congruence groups (so-called Shimura curves, see \cite{Shimura}), then its geometric genus is given by the Riemann--Hurwitz formula and essentially proportional to the volume while its arithmetic genus equals the topological genus of the underlying surface and can be arbitrarily smaller than the former. 

It is known that this phenomenon cannot occur for congruence orbifolds: using the uniform spectral gap for congruence quotients (see \cite{Clozel_tau}) and a theorem of P.~Zograf \cite{Zograf} it follows that there is a lower bound of the form $g \ge c \vol$ for congruence subgroups (see also \cite{LMR_genus_zero}). As a consequence of Theorem \ref{Main} we obtain the following asymptotically more precise result (we note that it was known for congruence covers of the modular surface by a result of J.~G.~Thompson \cite{Thompson}). 

\begin{theostar} \label{genus}
  Let $\Gamma_n$ be a sequence of congruence lattices in $\PSL_2(\mathbb R)$, and let $g_n$ be the topological genus of the orbifold $O_n = \Gamma_n \bs \HH^2$. Then, assuming the $\Gamma_n$ are not pairwise conjugated, we have
  \[
  \lim_{n\to +\infty} \frac{g_n}{\vol O_n} = \frac 1{4\pi}.
  \]
\end{theostar}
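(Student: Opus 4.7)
The natural plan is to combine Theorem~\ref{Main} with the Gauss--Bonnet formula for $2$-orbifolds. Write $O_n$ as an orbifold with underlying closed topological surface of genus $g_n$, with $k_n$ cusps and $s_n$ cone points of orders $m_{1,n},\ldots,m_{s_n,n}$. The orbifold Gauss--Bonnet theorem gives
\begin{equation*}
\vol(O_n)=2\pi\left(2g_n-2+k_n+\sum_{i=1}^{s_n}\left(1-\frac{1}{m_{i,n}}\right)\right),
\end{equation*}
so that $g_n/\vol(O_n)-1/(4\pi)=O((1+k_n+s_n)/\vol(O_n))$. The theorem thus reduces to the estimate $k_n+s_n=o(\vol(O_n))$. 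The pairwise non-conjugacy hypothesis combined with Borel's finiteness theorem (finitely many arithmetic lattices of bounded covolume up to conjugation) forces $\vol(O_n)\to\infty$, after which Theorem~\ref{Main} provides
\begin{equation*}
\lim_{n\to\infty}\frac{\vol((O_n)_{<R})}{\vol(O_n)}=0\quad\text{for every }R>0.
\end{equation*}

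To turn this Benjamini--Schramm input into a bound on $k_n+s_n$, the plan is to fix $R$ below the two-dimensional Margulis constant and decompose $(O_n)_{<R}$ via the orbifold Margulis lemma into disjoint ``elementary'' components, each attached to a single cusp, a single short closed geodesic, or the fixed-point set of an elementary elliptic subgroup (hence to at most a bounded number of cone points). It then suffices to exhibit a uniform positive lower bound $c(R)$, independent of $n$ and of the cone orders, on the volume of each cusp or cone-point component.

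The cusp bound is standard (via Shimizu's lemma each cusp carries an embedded horoball neighborhood of area bounded below by a universal constant, the singular dihedral case being dealt with by halving). The main delicate point is the cone-point bound, which I expect to be the principal obstacle: a naive estimate using an embedded disk of radius $R/2$ around the cone point gives area $(2\pi/m)(\cosh(R/2)-1)$, which degenerates as the cone order $m\to\infty$. The remedy is to observe that the displacement function of a small-angle rotation grows only slowly with the distance to its fixed point, so that the full $R$-thin neighborhood of a cone point of order $m$ is a hyperbolic cone of angle $2\pi/m$ and radius $\rho_m$ determined by $\sinh\rho_m=\sinh(R/2)/\sin(\pi/m)$, with area
\begin{equation*}
\frac{2\pi}{m}\bigl(\cosh\rho_m-1\bigr),
\end{equation*}
which stays uniformly bounded below as $m$ varies (the limit as $m\to\infty$ being $2\sinh(R/2)$). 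Combined with the Margulis decomposition and the cusp estimate this gives $c(R)(k_n+s_n)\le C\vol((O_n)_{<R})$, and plugging back into the Gauss--Bonnet reduction of the first paragraph completes the proof. The uniform cone-point estimate is the point where the proof genuinely uses that the full $R$-thin neighborhood of a high-order cone point is large even though its orbifold disk of radius $R/2$ is small; without it one could not exclude most of the thin part being concentrated in narrow high-order cone points.
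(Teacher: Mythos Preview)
Your proposal is correct and follows essentially the same route as the paper: Gauss--Bonnet reduces the statement to $k_n+s_n=o(\vol O_n)$, and this is deduced from Benjamini--Schramm convergence by showing that each cusp or cone point contributes a uniformly positive amount of volume to the thin part, the crux being exactly your observation that the thin neighbourhood of a cone point of order $m$ has area $(2\pi/m)(\cosh\rho_m-1)$ bounded away from zero. The paper packages the disjointness slightly differently (assigning a region $\Omega_x$ to each cone point and bounding the covering multiplicity by $2$, rather than using the connected-component decomposition of the thin part) and derives the radius asymptotically as $\ell(\theta,\eps)=-\log\theta+O(1)$ rather than via your cleaner closed formula $\sinh\rho_m=\sinh(R/2)/\sin(\pi/m)$, with the dihedral order-$2$ case handled separately via the collar lemma; but the substance is identical.
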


\medskip

%%%%%%%%%%%%%%%%%%%%%%%%%%%%%%

\subsection{Betti numbers of 3--orbifolds}

Theorem \ref{genus} implies the weaker result that $b_1(\Gamma_n)/\vol(\Gamma_n \bs \HH^2)$ converges to $1/2\pi$ for a sequence of congruence lattices. Indeed, the rank of abelianisation is essentially equal to twice the genus in a BS-convergent sequence. This can be proven more directly by analytical means, as $1/2\pi$ is the first $L^2$-Betti number of the hyperbolic plane. While more complicated, the analytic approach generalizes to the dimension 3 and where obtain the following result.

\begin{theostar} \label{Betti}
  Let $\Gamma_n$ be a sequence of congruence lattices in $\PSL_2(\mathbb C)$. Then
  \[
  \lim_{n \to +\infty} \frac{b_1(\Gamma_n)}{\vol(\Gamma_n \bs \HH^3)} = 0.
  \]
\end{theostar}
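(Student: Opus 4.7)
The strategy is to combine the Benjamini--Schramm convergence given by Theorem \ref{Main} with a heat-kernel argument in the spirit of \cite{7samurai}. By Theorem \ref{Main}, the orbifolds $M_n = \Gamma_n \bs \HH^3$ converge Benjamini--Schramm to $\HH^3$. Because $\HH^3$ is odd-dimensional, its $L^2$-Betti number density vanishes in every degree by Dodziuk's computation, so in particular $\beta_1^{(2)}(\HH^3) = 0$; the theorem asserts that the normalised first Betti number converges to this $L^2$-invariant.

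In the cocompact torsion-free case I would use Hodge theory to write
\[
b_1(\Gamma_n) = \dim\ker\Delta_1^{(n)} \leq \tr\bigl(e^{-t\Delta_1^{(n)}}\bigr) = \int_{M_n} \tr K^{(1)}_t(x,x)\,dx,
\]
valid for every $t>0$, where $\Delta_1^{(n)}$ denotes the Hodge Laplacian on 1--forms on $M_n$ and $K^{(1)}_t$ its heat kernel. Benjamini--Schramm convergence together with uniform estimates on $\tr K_t^{(1)}(x,x)$ in the thin part yield
\[
\limsup_n \frac{\tr(e^{-t\Delta_1^{(n)}})}{\vol M_n} \leq \tr_{\HH^3} K^{(1)}_t,
\]
and letting $t\to\infty$ the right-hand side decays to $\beta_1^{(2)}(\HH^3) = 0$, since $\HH^3$ has a spectral gap for $\Delta_1$ on $L^2$--forms.

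The non-cocompact case introduces two additional phenomena, namely cusps and orbifold torsion. Writing $H^1(M_n;\RR) = H^1_{\cusp}(M_n) \oplus H^1_{\eis}(M_n)$, the Eisenstein part has dimension bounded by a constant times the number $h_n$ of cusps by the half--lives, half--dies principle applied to the boundary tori. Each cusp contributes a uniform positive amount of volume to the Margulis thin part of $M_n$, so Theorem \ref{Main} forces $h_n = o(\vol M_n)$ and the Eisenstein contribution is negligible. The cuspidal part can be bounded by a heat-kernel trace on the cuspidal subspace of $L^2$-forms, and then treated exactly as in the cocompact case.

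The main obstacle will be the rigorous analytic implementation in the non-cocompact orbifold setting. Since the Laplace spectrum on 1--forms is no longer purely discrete, one has to truncate (or invoke the Selberg trace formula) in order to isolate the cuspidal contribution from the Eisenstein and residual components of the heat trace. Simultaneously, the pointwise bounds on $\tr K_t^{(1)}(x,x)$ in the thin part must accommodate both short closed geodesics and orbifold torsion singularities, where the local trace can grow with the order of the isotropy group. The analytic framework of \cite{7samurai} combined with the quantitative orbifold thin-part estimates from \cite{fraczyk} should provide the right tools, but putting them together carefully is the technical heart of the argument.
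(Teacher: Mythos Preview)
Your high-level strategy is exactly the paper's: Benjamini--Schramm convergence from Theorem~\ref{Main}, a heat-kernel upper bound for $b_1$, and the vanishing of $\beta_1^{(2)}(\HH^3)$. What you are missing is the concrete device that makes the analysis go through in the presence of orbifold singularities, and you flag this yourself as ``the main obstacle'' without resolving it.

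The paper does \emph{not} attempt to control $\tr K_t^{(1)}(x,x)$ on the orbifold near the singular locus. Instead it excises a neighbourhood of the entire thin part (which contains the singular set and the cusps) and smooths the resulting boundary, producing a compact Riemannian \emph{manifold} with boundary $O_n'$ of uniformly $C$-bounded geometry (Lemma~\ref{smoothing}, relying on the metric classification of the singular locus in Lemma~\ref{3dim_sing} and the corner-smoothing of Proposition~\ref{general_smoothing}). Two things then happen. First, a purely topological argument shows $\Gamma_n$ is a quotient of $\pi_1(O_n')$ and $O_n'$ is aspherical, so $b_1(\Gamma_n)\le b_1(O_n')$; no Hodge theory on the orbifold is needed. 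Second, Hodge theory with \emph{absolute boundary conditions} on $O_n'$ gives $b_1(O_n')\le \otr(e^{-t\Delta^1_\abs[O_n']})$, and now the L\"uck--Schick heat-kernel estimates for manifolds of bounded geometry apply uniformly in $n$. The comparison of this heat trace with the $\HH^3$ heat kernel is carried out in three pieces (Section~\ref{sec_betti}): a Selberg-type expansion on the thick part, a Gaussian comparison between the $O_n$- and $O_n'$-kernels away from the boundary, and a uniform pointwise bound near $\partial O_n'$ coming from bounded geometry.

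So the gap in your proposal is not the outline but the missing reduction to a manifold with boundary of bounded geometry. Your plan to bound the orbifold heat trace directly at cone points, where the local trace blows up with the isotropy order, is exactly what the paper sidesteps; and your cuspidal/Eisenstein decomposition becomes unnecessary once one works on the compact $O_n'$, since there is no continuous spectrum there and the cusp contribution to $b_1$ is absorbed into the inequality $b_1(\Gamma_n)\le b_1(O_n')$.
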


This was proven in \cite{raimbault} for non-uniform lattices, and in \cite{fraczyk} in the case of all torsion-free lattices. Our proof is very similar to the proof for hyperbolic 3--manifolds appearing in \cite{7samurai}. 

%%%%%%%%%%%%%%%%%%%%%%%%%%%%%%

\subsection{Congruence lattices}\label{lattices}

For completeness we give an explicit description of the congruence arithmetic latices in $G={\rm PGL}(2,\RR),{\rm PGL}(2,\CC)$, though we will not directly use this structure theory in the rest of the paper. Let $\mathbb K=\RR,\CC$. We start by choosing a number field $k$ with Archimedean places $\nu_1,\ldots,\nu_d$ such that $k_{\nu_1}\simeq \mathbb K$ and $k_{\nu_i}\simeq \mathbb R$ for $i\geq 2$. In what follows $\mathbb A,\mathbb A_f$ stand for the ring of ad\`eles, respectively finite ad\`eles of $k$. We will write $k\ni x\mapsto (x)_\nu\in k_\nu$ for the embedding of $k$ in its completion $k_\nu$.  Let $a,b\in k^\times$ be such that $(a)_{\nu_i},(b)_{\nu_i}$ are positive for $i\geq 2$ and $(a)_{\nu_1}$ or $(b)_{\nu_1}$ is negative if $\mathbb K\simeq \RR$. We define the quaternion algebra $A$ as 
\[
A = k+ {\mathbf i}k+{\mathbf j}k+ \mathbf{ij}k,
\]
subject to the relations ${\mathbf i}^2=-a,{\mathbf j}^2=-b,\mathbf{ij}=-\mathbf{ji}$. By our choice of $a,b$ we have $A\otimes_k k_{\nu_1}\simeq M(2,\mathbb K)$ and for $i\geq 2$ the algebra $A\otimes_k k_{\nu_i}$ is isomorphic to the Hamilton's quaternions. We form an algebraic group ${\mathrm{PA}^\times}=A^\times/k^\times$. It is an adjoint simple group of type $A_1$ defined over $k$. Note that $\mathrm{PA}^\times(\mathbb A)=\mathrm{PA}^\times(k\otimes_\QQ \RR)\times \mathrm{PA}^\times(\mathbb A_f)$ and
\[
\mathrm{PA}^\times(k\otimes_\mathbb{Q} \mathbb R)=\prod_{i=1}^d\mathrm{PA}^\times(k_{\nu_i})\simeq {\rm PGL}(2,\mathbb K)\times {\rm PO}(3)^{d-1}.
\]
Choose an open compact subgroup $U$ of $\mathrm{PA}^\times(\mathbb A_f).$ Let $\Gamma_U=\mathrm{PA}^\times(k)\cap (\mathrm{PA}^\times(k\otimes_\QQ \RR)\times \mathrm{PA}^\times(\mathbb A_f))$. By a classical result of Borel-Harish-Chandra \cite{BHC} the group $\Gamma_U$ is a lattice in $\mathrm{PA}^\times(k\otimes_\QQ \RR)\times \mathrm{PA}^\times(\mathbb A_f)\simeq {\rm PGL}(2,\mathbb K)\times {\rm PO}(3)^{d-1}\times U$.  The projection of $\Gamma_U$ to the factor $\PGL(2,\mathbb K)$ is a {\em congruence arithmetic lattice} in ${\rm PGL}(2,\mathbb K)$. Every congruence arithmetic lattice of ${\rm PGL}(2,\mathbb K)$ arises in this way.

%%%%%%%%%%%%%%%%%%%%%%%%%%%%%%

\subsection{Outline of the paper} 

In Section \ref{sec_BSconv} we describe a general criterion for the Benjamini--Schramm convergence of lattices in the isometry group of a proper Gromov-hyperbolic spaces and we apply it, together with the estimates from \cite{fraczyk}, to deduce Theorem \ref{Main}. Next, in section \ref{sec_sing} we give a precise metric description of the singular locus of hyperbolic 2- and 3-orbifolds, and a way to smooth the boundary of the thick part while keeping control of the geometry (the technical details of which are left to Appendix \ref{appendix_smooth}). We use the description of singularities and Theorem \ref{Main} to deduce Theorem \ref{genus} in section \ref{sec_genus}. In section \ref{sec_betti} we use heat kernel methods (for which we need the precise description of the smoothed thick part) to deduce Theorem \ref{Betti} from Theorem \ref{Main}. 

%%%%%%%%%%%%%%%%%%%%%%%%%%%%%%%%%%%%%%%%%%%%%%%%%%%%%%%%%%%%%%%%%%%%%%%%%%%%%%%%

\section{Benjamini--Schramm convergence of quotients of hyperbolic spaces}  \label{sec_BSconv}

\subsection{Orbital integrals on hyperbolic spaces}

Let $X$ be a proper Gromov-hyperbolic space and $G = \isom(X)$. With the compact-open topology $G$ is a locally compact second countable topological group. For $\gamma \in G$ we denote by $G_\gamma$ its centraliser. The following lemma is a slight generalisation of \cite[Corollary 3.10(2) on p. 463]{Bridson_Haefliger}---the latter dealing only with discrete groups. It might be possible to straightforwrdly adapt the arguments in loc. cit. to our case, but we give a different, mostly self-contained proof. 

\begin{lem} \label{cocompact_centr}
  Let $\gamma \in G$ be an hyperbolic isometry. Then $G_\gamma / \langle \gamma \rangle$ is compact. 
\end{lem}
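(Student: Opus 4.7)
The plan is to find a compact subset $S\subset G_\gamma$ that surjects onto $G_\gamma/\langle\gamma\rangle$ under the quotient map. Since $X$ is proper, the isometry group $G$ with the compact--open topology acts properly on $X$ (an Arzel\`a--Ascoli type argument), so fixing a basepoint $x_0\in X$, the sets $K_R=\{g\in G : d(gx_0,x_0)\leq R\}$ are compact for every $R\geq 0$. It is therefore enough to produce an $R>0$ such that every $g\in G_\gamma$ can be written as $\gamma^n h$ with $n\in\mathbb Z$ and $h\in K_R$; then $G_\gamma\cap K_R$ is compact and its image in $G_\gamma/\langle\gamma\rangle$ is all of $G_\gamma/\langle\gamma\rangle$.

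To find such an $R$ I would use that, $\gamma$ being hyperbolic in the Gromov sense, the orbit map $n\mapsto\gamma^n x_0$ is a $(\lambda,c)$-quasi-isometric embedding of $\mathbb Z$ into $X$, with constants depending only on $\gamma$ and $x_0$, and with limits two distinct points $\gamma^{+},\gamma^{-}\in\partial X$. For every $g\in G_\gamma$ the translated orbit $n\mapsto\gamma^n(gx_0)=g\gamma^n x_0$ is a $(\lambda,c)$-quasi-geodesic with the same constants, because $g$ is an isometry and thus $d(gx_0,\gamma gx_0)=d(x_0,\gamma x_0)$. Moreover this second quasi-geodesic still limits to $\{\gamma^{+},\gamma^{-}\}$, since $\gamma^n y$ tends to $\gamma^{\pm}$ as $n\to\pm\infty$ for every $y\in X$ by the north--south dynamics of hyperbolic isometries in a Gromov-hyperbolic space.

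The Morse/stability lemma for $\delta$-hyperbolic spaces then supplies a constant $D=D(\delta,\lambda,c)$ such that any two $(\lambda,c)$-quasi-geodesics with the same pair of boundary endpoints have Hausdorff distance at most $D$. Applied to the two quasi-geodesics above, this yields an integer $n\in\mathbb Z$ with $d(gx_0,\gamma^n x_0)\leq D$, i.e.\ $\gamma^{-n}g\in K_D\cap G_\gamma$. This is exactly the decomposition desired, and the compactness of $G_\gamma/\langle\gamma\rangle$ follows.

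The main obstacle, and the only input really specific to the Gromov-hyperbolic (rather than CAT($-1$)) setting, is to ensure that the geometric facts cited above are available in this generality: properness of the $G$-action, the north--south dynamics of a hyperbolic isometry, the quasi-isometric embedding property of its cyclic orbits, and the Morse stability of quasi-geodesics. All of these are classical, but the key observation which makes the argument quantitative is that the quasi-isometry constants of $n\mapsto g\gamma^n x_0$ are independent of $g\in G_\gamma$, so that the Morse constant $D$ can be chosen uniformly in $g$.
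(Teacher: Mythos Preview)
Your proof is correct. Both your argument and the paper's reduce to showing that every point $gx_0$ with $g\in G_\gamma$ lies within a uniform distance of the orbit $\langle\gamma\rangle x_0$, after which compactness of $G_\gamma/\langle\gamma\rangle$ follows from properness of the action. The difference is in the tool used to establish this uniform proximity. You invoke the Morse stability lemma: the quasi-geodesics $n\mapsto\gamma^n x_0$ and $n\mapsto\gamma^n(gx_0)=g\gamma^n x_0$ share the same quasi-isometry constants (the second being the isometric $g$-image of the first) and the same endpoints at infinity (by north--south dynamics), hence are at uniformly bounded Hausdorff distance. The paper instead proves a bespoke displacement inequality (Lemma~\ref{dist_trans}, established via a tree-approximation argument in Appendix~\ref{proof_lemma}): $d(y,\gamma^k y)\ge Ck+2d(y,\langle\gamma\rangle x)-A$ for $k$ large, and applies it to the $G_\gamma$-invariant sublevel set $D=\{y:d(y,\gamma^k y)\le k\tau+1\}$ to see that $D$ sits in a tube around $\langle\gamma\rangle x$. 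Your route is shorter and rests on standard facts about hyperbolic spaces; the paper's is more self-contained but pays for it with the appendix.
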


For the proof we use the following lemma, which should be standard but we could not find in the literature. The proof is a bit long and technical and we put it in Appendix \ref{proof_lemma}. 

\begin{lem} \label{dist_trans}
  Let $\gamma$ be an hyperbolic isometry of $X$. For any $x \in X$ there exists constants $C = C(x, \gamma, \delta)$ and $A = A(x, \gamma, \delta)$ such that for any $y \in X$ and any $k$ sufficiently large (depending on $\gamma, x, \delta$) we have
  \[
  d(y, \gamma^k y) \ge Ck + 2d(y, \langle \gamma\rangle x) - A.
  \]
\end{lem}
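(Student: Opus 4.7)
The plan is to exploit the quasi-axis of $\gamma$ together with the standard fellow-traveller property for quasi-geodesics in Gromov-hyperbolic spaces. Since $\gamma$ is a hyperbolic isometry, its stable translation length $\tau = \lim_n d(x,\gamma^n x)/n$ is strictly positive; the sequence $n \mapsto d(x,\gamma^n x)$ is subadditive (because $\gamma^n$ is an isometry), so by Fekete's lemma $d(x,\gamma^k x) \geq k\tau$ for every $k \geq 0$. Moreover the orbit $\mathcal{O} := \langle \gamma\rangle x$ is a $(\lambda,\epsilon)$-quasi-geodesic with $\lambda,\epsilon$ depending only on $x$ and $\gamma$, and by the Morse lemma $\mathcal{O}$ lies at bounded Hausdorff distance $D = D(\delta,\lambda,\epsilon)$ from any bi-infinite geodesic joining the two fixed points of $\gamma$ on $\partial X$.

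Fix $y \in X$ and choose a nearest-point projection $p = \gamma^n x \in \mathcal{O}$ of $y$ on $\mathcal{O}$. By $\gamma^k$-equivariance the point $\gamma^k p = \gamma^{n+k} x$ is then a nearest-point projection of $\gamma^k y$, and by isometry invariance
\[
d(p,\gamma^k p) = d(x,\gamma^k x) \geq k\tau.
\]

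The central ingredient is the following standard consequence of $\delta$-hyperbolicity (a \emph{thin quadrilateral} inequality; compare \cite{Bridson_Haefliger}, Chapter III.H): there exist constants $L, A' > 0$ depending only on $\delta,\lambda,\epsilon$ such that whenever $y_1,y_2 \in X$ admit nearest-point projections $p_1,p_2$ on $\mathcal{O}$ with $d(p_1,p_2) \geq L$, then
\[
d(y_1,y_2) \geq d(y_1,p_1) + d(p_1,p_2) + d(p_2,y_2) - A'.
\]
The standard proof transfers $p_1,p_2$ to a genuine bi-infinite geodesic via the Morse lemma and applies $\delta$-slimness to the resulting hexagon, the Morse error being absorbed into $A'$. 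Applying this to $y_1 = y,\ y_2 = \gamma^k y$ for $k$ large enough that $k\tau \geq L$, and using $d(y,p) = d(\gamma^k y,\gamma^k p) = d(y,\mathcal{O})$, one gets
\[
d(y,\gamma^k y) \geq 2\,d(y,\mathcal{O}) + k\tau - A',
\]
which is the claimed inequality with $C = \tau(\gamma)$ and $A = A'$.

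The main technical obstacle is the verification of the thin quadrilateral inequality for the quasi-geodesic $\mathcal{O}$ (as opposed to a genuine geodesic), with constants depending only on $\delta,\lambda,\epsilon$ and uniform in $y$. This requires careful bookkeeping of the cumulative additive errors produced by the Morse lemma and by the coarse non-uniqueness of nearest-point projections, but since none of these errors depend on $y$, they all contribute only to the additive constant $A$ and not to the linear coefficient $C$.
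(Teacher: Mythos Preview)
Your approach is correct and follows a genuinely different route from the paper. You invoke the standard nearest-point-projection machinery for $\delta$-hyperbolic spaces: the orbit is a quasi-geodesic, nearest-point projection onto it is coarsely well-defined and $\gamma$-equivariant, and the ``thin quadrilateral'' inequality (once $d(p_1,p_2)$ exceeds a threshold depending only on $\delta,\lambda,\epsilon$) forces a geodesic $[y,\gamma^k y]$ to pass close to both projections, giving the additive lower bound. This is entirely standard, and as you note the only work is in tracking the additive constants through the Morse lemma; since none of them depend on $y$, the conclusion follows.

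The paper's proof in Appendix~\ref{proof_lemma}, by contrast, avoids the Morse lemma and projections altogether: it applies Bowditch's tree-approximation lemma \cite[Proposition~7.3.1]{Bowditch} to the finite set $\{x_0,\ldots,x_k,y_0,y_k\}$ and argues combinatorially about the structure of the resulting spanning tree to conclude that the path from $y_0$ to $y_k$ in the tree must pass through $x_i$ and $x_j$ with $i$ small and $j$ close to $k$. Your argument is shorter and more conceptual, leaning on black-box facts that are routine in geometric group theory; the paper's argument is more self-contained but considerably more technical. Both yield the same constants up to harmless changes, and both require $k$ large only to ensure $d(p,\gamma^k p)$ (resp.\ the tree estimate) beats the fixed threshold.
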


\begin{proof}[Proof of lemma \ref{cocompact_centr}]
  Let $\tau = d(\gamma) := \inf\{d(y,\gamma y)|y\in X\}$ be the minimal displacement of $\gamma$. Fix $x \in X$, let $k, A, C$ as given by Lemma \ref{dist_trans} and define: 
  \[
  D = \{y\in X| d(y,\gamma^k y)\leq k\tau + 1\}.
  \]
  It is a non-empty (by definition of $\tau$) closed $G_\gamma$-invariant subset of $X$. Given that the action of $G_\gamma$ on $D$ is proper, the Lemma will follow once we prove that $\langle \gamma\rangle \bs D$ is compact. The previous Lemma implies that
  \[
  D \subset \{ y \in X :\: d(y, \langle \gamma\rangle x) \le (\tau - C)k + A + 1 \}
  \]
  so that $D \subset \gamma^\ZZ B(x, R)$ for some sufficiently large $R$, and as $X$ is proper this in turn implies that $\langle \gamma\rangle \bs D$ is compact. 
\end{proof}

Let $dg$ be a fixed Haar measure on $G$. According to the lemma above the subgroup $G_\gamma$ admits a lattice so it is unimodular and we have a decomposition $dg = dx dh$ where $dx$ is a $G$-invariant measure on $G / G_\gamma$ and $dh$ a Haar measure on $G_\gamma$, both depending only on the original choice of $dg$. For a function $f \in C_0(G)$ we can then define the {\em orbital integral} associated to $\gamma$ by:
\begin{equation}\label{orb_int}
  \mathcal O_f(\gamma) = \int_{G/G_\gamma} f(\gamma^{-1} x \gamma) dx
\end{equation}
which depends only on the $G$-conjugacy class $[\gamma]_G$. 

%%%%%%%%%%%%%%%%%%%%%%%%%%%%%%

\subsection{General criterion for Benjamini--Schramm convergence}

Here again $X$ is always a proper Gromov-hyperbolic space and $G = \isom(X)$. We assume that the action of $G$ on $X$ is non-elementary. The {\em elliptic radical} of $G$ can then be defined as its unique maximal normal compact subgroup (see \cite[Proposition 3.4]{Osin}; in our context, by properness of $X$ means that bounded elements are the same as compact ones). The following lemma is a special case of \cite[Theorem 1.5]{Osin}. 

\begin{lem} \label{full_limit_set}
  Let $\mu$ be an invariant random subgroup of $G$. Then either $\mu$ is supported on the elliptic radical or it has full limit set. 
\end{lem}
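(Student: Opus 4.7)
The plan is to adapt (or directly invoke) the scheme used by Osin in \cite{Osin}, specialised to our geometric setting. Consider the Borel $G$-equivariant map $\Phi \colon \sub_G \to 2^{\partial X}$ sending a closed subgroup $\Lambda$ to its limit set $\Lambda(\Lambda) \subset \partial X$; pushing $\mu$ forward by $\Phi$ produces a $G$-invariant probability measure $\nu$ on the space of closed subsets of $\partial X$, and a $\mu$-random $\Lambda$ falls into one of the standard types of isometry subgroups of a Gromov-hyperbolic space (elliptic, horocyclic/parabolic, lineal, focal, or of general type) according to the size and nature of $\Lambda(\Lambda)$. Decomposing $\mu$ into its ergodic components, we may assume that $\mu$-a.e. subgroup has a fixed type, and then dispose of each type separately.

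First, I would rule out the cases where the random subgroup fixes a distinguished finite subset of $\partial X$. In the horocyclic/focal case the random subgroup fixes a unique point $p(\Lambda) \in \partial X$, and the $G$-equivariance of the assignment $\Lambda \mapsto p(\Lambda)$ together with the invariance of $\mu$ yields a $G$-invariant probability measure on $\partial X$; since the action of $G$ on $\partial X$ is non-elementary (and hence strongly proximal on its limit set, with no finite invariant measure concentrated on finitely many points) this is impossible. The lineal case is handled identically using the unordered pair of fixed points. The general-type case with proper limit set is ruled out similarly: such a subgroup has a well-defined minimal closed invariant proper subset of $\partial X$, again producing a forbidden $G$-invariant structure on $\partial X$.

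The two surviving cases are precisely the dichotomy in the statement. Either $\mu$-a.e.\ $\Lambda$ has full limit set, and we are done, or $\mu$-a.e.\ $\Lambda$ is elliptic. In the latter case, properness of $X$ together with Lemma \ref{cocompact_centr}-style compactness arguments show that each such $\Lambda$ has bounded orbits in $X$ and is therefore a compact subgroup of $G$. The subset of $\sub_G$ consisting of compact subgroups is $G$-invariant, and the union of $\mu$-a.e.\ $\Lambda$ together with all its $G$-conjugates generates a compact normal subgroup of $G$; by maximality this must be contained in the elliptic radical, whence $\mu$ is supported on subgroups of the elliptic radical.

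The main obstacle I expect is in the first step: excluding invariant measures on parabolic fixed points in the generality of $G = \isom(X)$ for an arbitrary proper Gromov-hyperbolic $X$. For discrete groups this is standard from convergence-group dynamics, but for the full isometry group one needs the non-elementarity of the $G$-action and the absence of $G$-invariant measures on $\partial X$ in a sufficiently robust form; invoking Osin's Theorem 1.5 directly finesses this point, and given that the authors flag the lemma as a special case of that theorem, the cleanest write-up is simply to verify that the hypotheses of \cite[Theorem 1.5]{Osin} apply to $\isom(X)$ acting on $X$ and quote the conclusion.
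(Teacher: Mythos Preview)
Your final recommendation—verify the hypotheses and cite \cite[Theorem 1.5]{Osin}—is exactly what the paper does; the lemma is stated there with no proof beyond that citation.

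One point in your exploratory sketch deserves a flag, though it does not affect your write-up since you ultimately defer to Osin. In the elliptic case you assert that ``the union of $\mu$-a.e.\ $\Lambda$ together with all its $G$-conjugates generates a compact normal subgroup of $G$''. This is false in general: the $G$-conjugates of a compact subgroup can generate all of $G$ (e.g.\ the conjugates of $\SO(2)$ in $\PSL_2(\RR)$). The passage from ``$\mu$ is supported on compact subgroups'' to ``$\mu$ is supported on subgroups of the elliptic radical'' genuinely requires the IRS hypothesis and is one of the substantive steps in Osin's argument, not a formality. You correctly identify the parabolic/focal case as delicate, but the elliptic case is where your sketch would actually break if carried out.
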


Recall from \cite[Section 3]{Gelander_ICM} that there is a ``Benjamini--Schramm topology'' on the set of Borel probability measures on the Gromov--Hausdorff space of pointed proper metric spaces (up to isometry). The set of measures supported on spaces locally isometric to $X$ is precompact in this topology. Moreover, if $X$ is a locally symmetric space then \eqref{BS_symmetric} is equivalent to $\Gamma_i \bs X$ converging in the Benjamini--Schramm topology to $X$. 

There is a continuous injective map from the space of invariant random subgroups of $G$ to the Benjamini--Schramm space. If $\Gamma_i$ are lattices in $G$ then the sequence of uniformly pointed spaces $\Gamma_i \bs X$ converges to $X$ if and only if the IRSs $\mu_{\Gamma_i}$ converge to the trivial IRS. We will use this to prove the following criterion for convergence. 

\begin{prop} \label{main_tech}
  Let $U$ the set of hyperbolic isometries in $G$. Assume that the elliptic radical of $G$ is trivial. If $\Gamma_n$ is a sequence of lattices in $G$ which satisfies:
  \begin{equation} \label{regular_sums}
    \lim_{n \to +\infty} \frac{\sum_{[\gamma]_{\Gamma_n} \subset U} \vol((\Gamma_n)_\gamma\bs G_\gamma)\mathcal O_f(\gamma)}{\vol(\Gamma_n \bs G)} = 0  
  \end{equation}
  then the sequence of metric spaces $\Gamma_n \bs X$ converges to $X$ in the Benjamini--Schramm topology.
\end{prop}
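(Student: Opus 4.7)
My plan would be to argue in the IRS formulation: it suffices to show that any weak-$*$ accumulation point $\mu$ of the sequence $(\mu_{\Gamma_n})$ in the space of IRSs on $G$ equals the trivial IRS $\delta_{\{1\}}$, since this space is compact (as $G$ is second countable, so $\sub_G$ is compact metrizable) and IRS convergence injects continuously into the Benjamini--Schramm topology as recalled earlier.

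The bridge between the hypothesis \eqref{regular_sums} and the IRS $\mu_{\Gamma_n}$ is the classical unfolding identity
\[
  \int_{\sub_G} \sum_{\gamma \in \Lambda} f(\gamma)\, d\mu_{\Gamma_n}(\Lambda) = \frac{1}{\vol(\Gamma_n \bs G)} \sum_{[\gamma]_{\Gamma_n}} \vol((\Gamma_n)_\gamma \bs G_\gamma)\, \mathcal O_f(\gamma),
\]
valid for any $f\in C_c(G)$; this is obtained by unfolding $\int_{G/\Gamma_n} \sum_{\gamma \in \Gamma_n} f(g\gamma g^{-1})\,dg$ through conjugacy classes in $\Gamma_n$ and then factoring the integral on $G/(\Gamma_n)_\gamma$ through $G/G_\gamma$. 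Finiteness of the factors $\vol((\Gamma_n)_\gamma \bs G_\gamma)$ for hyperbolic $\gamma$ is precisely the content of Lemma \ref{cocompact_centr}. Taking $f \geq 0$ continuous and compactly supported in the open set $U$ of hyperbolic isometries, non-hyperbolic classes contribute zero orbital integral, so \eqref{regular_sums} forces the left-hand side to tend to $0$ as $n\to\infty$.

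Next I would invoke lower semicontinuity on $\sub_G$ of the functional $F_f : \Lambda \mapsto \sum_{\gamma \in \Lambda} f(\gamma)$ (immediate for $f \geq 0$ as a supremum of continuous finite partial sums). The portmanteau inequality for weak-$*$ convergence then gives $\int F_f\, d\mu \leq \liminf_n \int F_f\, d\mu_{\Gamma_n} = 0$, hence $\Lambda \cap \supp f = \emptyset$ for $\mu$-almost every $\Lambda$. Choosing a countable family $\{f_i\}$ of non-negative $C_c(G)$-functions supported in $U$ whose positivity sets exhaust $U$ (using openness of $U$ and second-countability of $G$), I would conclude that $\mu$-a.e.\ $\Lambda$ contains no hyperbolic isometry. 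The dichotomy of Lemma \ref{full_limit_set} then rules out the ``full limit set'' alternative, because a subgroup with full limit set is automatically non-elementary and hence contains hyperbolic isometries. So $\mu$ is supported on the elliptic radical, which by hypothesis is trivial, giving $\mu = \delta_{\{1\}}$.

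The hard part will be the technical bookkeeping rather than any single deep step: one needs the openness of $U$ in the compact-open topology in order to exhibit the exhausting family $\{f_i\}$, and one needs lower semicontinuity of $F_f$ to pass to accumulation points (the potential worry about non-discrete $\Lambda$ where the sum could blow up is harmless, since any weak-$*$ limit of IRSs $\mu_{\Gamma_n}$ is supported on discrete subgroups, as recalled in the introduction). The orbital-integral identity itself is routine once Lemma \ref{cocompact_centr} provides the needed unimodularity and finite covolume.
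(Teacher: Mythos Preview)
Your proposal is correct and follows essentially the same route as the paper's proof: pass to the IRS formulation, use the unfolding identity to rewrite the hypothesis as $\int F_f\,d\mu_{\Gamma_n}\to 0$ for $f\ge 0$ supported in $U$, invoke lower semicontinuity of $F_f$ together with the Portmanteau theorem to push this to any weak-$*$ limit $\mu$, exhaust $U$ by a countable family, and then apply Lemma~\ref{full_limit_set} to conclude $\mu=\delta_{\{1\}}$. The only cosmetic differences are that the paper covers $U$ by compact sets first and then chooses a bump function for each, and that it handles non-discrete $\Lambda$ by an explicit case split in the definition of $F$ rather than (as you do) by noting that $\mu$ is supported on discrete subgroups; neither difference is substantive.
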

 
\begin{proof}
  Let $\mu_n$ be the invariant random subgroup of $G$ supported on the conjugacy class of $\Gamma_n$. We want to prove that any weak limit $\mu$ of a subsequence of $(\mu_n)$ is equal to the trivial IRS $\delta_e$. By Lemma \ref{full_limit_set}, and the fact that a subgroup of $G$ containing no hyperbolic isometries has at most one limit point (cf. \cite[Section 8.2]{Gromov_hyp}) it suffices to prove that any such $\mu$ contains no hyperbolic isometries. 

  To prove this choose a covering $U = \bigcup_{C \in \mathcal C} C$ of $U$ where $\mathcal C$ is countable and every $C \in \mathcal C$ is compact. We can do this since $\sub_G$ is metrizable \cite[Proposition 2]{dlHarpe1}. Let $W_C = \Lambda : \Lambda \cap C \not= \emptyset$ ; this is a Chabauty-closed subset of $\sub_G$. If $\nu$ is a nontrivial IRS then by Lemma \ref{full_limit_set} and previous paragraph it almost surely contains a hyperbolic element. Hence, there is $C \in \mathcal C$ such that $\nu(W_C) > 0$. We need to prove the opposite for $\mu$, which amounts to the following : for every $C$ there exists a non-negative Borel function $F$ on $\sub_G$ which is positive on $W_C$ and such that $\int_{\sub_G} F(\Lambda) d\mu(\Lambda) = 0$. 

  Let us fix $C \in \mathcal C$ and prove this. There exists an open relatively compact subset $V$ with $C \subset V$ and  $\ovl V\subset U$. Choose any $f \in C^\infty(G)$ such that $f > 0$ on $C$ and $f = 0$ on $G \setminus V$ and define :
  \[
  F(\Lambda) = \begin{cases} \sum_{\lambda\in\Lambda} f(\lambda) &\text{if } \Lambda \text{ is discrete; } \\
    1 & \text{ if } \Lambda \text{ is not discrete and intersects } C;\\ 
    0 & \text{otherwise.} \end{cases}
  \] 
  Then $F$ is lower semicontinuous on $\sub_G$, non-negative and positive on $W_C$. On the other hand we have :
  \begin{align*}
    \int_{\sub_G} F(\Lambda) d\mu_n(\Lambda) &= \frac 1 {\vol(\Gamma_n \bs G)} \int_{G/ \Gamma_n} \sum_{\gamma \in g\Gamma_n g^{-1}} f(\gamma) dg \\
        &= \frac 1 {\vol(\Gamma_n \bs G)} \sum_{[\gamma]_{\Gamma_n} \subset U} \vol((\Gamma_n)_\gamma\bs G_\gamma)\mathcal O_\gamma(f). 
  \end{align*}
  By the so-called ``Portemanteau theorem'' \cite[Theorem 13.16]{klenke} the limit inferior of the left-hand side is larger or equal to $\int_{\sub_G} F(\Lambda) d\mu(\Lambda)$. By \eqref{regular_sums} we have that the right-hand side converges to 0. It follows that
  \[
  \int_{\sub_G} F(\Lambda) d\mu(\Lambda) = 0
  \]
  which finishes the proof. 
\end{proof}

%%%%%%%%%%%%%%%%%%%%%%%%%%%%%%

\subsection{Proof of Theorem \ref{Main}}

If $X$ is a rank-one irreducible symmetric space such as $\HH^2$ or $\HH^3$ and $G = \isom(X)$ then $G$ is a simple Lie group of non-compact type and its elliptic radical is trivial. Theorem \ref{Main} thus follows immediately from Proposition \ref{main_tech} and the following result extracted from \cite{fraczyk}. 

\begin{theo}\label{reg_conv}
  Let $G = \PGL_2(\RR)$ or $\PGL_2(\CC)$ and let $U$ be the set of hyperbolic elements of $G$. Let $\Gamma_n$ a sequence of arithmetic congruence lattices in $G$, such that $\vol(\Gamma_n \bs G) \to +\infty$ or any sequence of pairwise non-commensurable arithmetic lattices. Then for any $f \in C_0^\infty(G)$ we have : 
  \begin{equation} \label{sum_hyp_integrals}
  \frac 1 {\vol(\Gamma_n \bs G)} \left| \sum_{[\gamma] \subset U} \vol((\Gamma_n)_\gamma\bs G_\gamma)\mathcal O_f(\gamma) \right| \xrightarrow{n \to +\infty}{} 0. 
  \end{equation}
\end{theo}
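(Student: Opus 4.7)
The plan is to recognise the sum in \eqref{sum_hyp_integrals} as the geometric contribution of short closed geodesics and then invoke the main quantitative estimate of \cite{fraczyk}. The first step is the standard unfolding identity: for any $f \in C_0^\infty(G)$,
\[
\sum_{[\gamma]_{\Gamma_n} \subset U} \vol((\Gamma_n)_\gamma \bs G_\gamma) \mathcal{O}_f(\gamma) = \int_{\Gamma_n \bs G} \sum_{\gamma \in g \Gamma_n g^{-1} \cap U} f(\gamma) \, dg,
\]
obtained by decomposing the hyperbolic elements of $\Gamma_n$ into $\Gamma_n$-conjugacy classes and using the factorisation $dg = dx\, dh$ which defines $\mathcal{O}_f$ in \eqref{orb_int}.

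Next I would localise the right-hand side. Since $f$ has compact support in $G$, the inner sum is nonzero only for $g$ whose image in $\Gamma_n \bs X$ lies within a fixed distance $R = R(\supp f)$ of a closed geodesic in $\Gamma_n \bs X$ of length at most $L = L(\supp f)$. After averaging $f$ over a maximal compact $K \subset G$ and descending to $X = G/K$, the right-hand side is bounded above by a constant $C(f)$ times the volume of the $R$-neighbourhood of the set of closed geodesics of length $\le L$ in $\Gamma_n \bs X$; the centraliser factor $\vol((\Gamma_n)_\gamma \bs G_\gamma)$ is exactly what turns the transversal integration along the axis of $\gamma$ into a volume rather than a counting measure.

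The remaining step, which I would quote as a black box from \cite{fraczyk}, is the estimate that for every fixed $L, R$ the volume of this loxodromic thin part is $o(\vol(\Gamma_n \bs G))$, valid for both families under consideration: the congruence case is the main theorem of \cite{fraczyk}, while the case of pairwise non-commensurable arithmetic lattices is reduced to it by passing in each commensurability class to a congruence lattice of comparable covolume (using that every arithmetic lattice is commensurable with a congruence one and that covolumes in a fixed commensurability class are bounded below). Dividing by $\vol(\Gamma_n \bs G)$ then yields \eqref{sum_hyp_integrals}. The main obstacle lies entirely in this imported estimate, whose proof in \cite{fraczyk} hinges on a careful analysis of the arithmetic structure of small elements in congruence quaternion orders; the contribution here is only the geometric reformulation of the hyperbolic part of the trace formula as an integral over a loxodromic thin part.
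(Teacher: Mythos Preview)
Your argument is essentially the paper's: both invoke the estimates of \cite{fraczyk} as a black box. The paper is slightly more direct, citing the bound
\[
\sum_{\substack{[\gamma]_{\Gamma}\\ \text{non-torsion}}}\vol(\Gamma_\gamma\bs G_\gamma)\mathcal O_\gamma(f)\ll \vol(\Gamma\bs G)^{0.986}
\]
obtained in the proof of \cite[Theorem~1.8]{fraczyk} (via the ad\`elic trace formula), so your unfolding to the volume of the loxodromic thin part is a correct but unnecessary detour---the two formulations are equivalent and both are established in \cite{fraczyk}.

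For the non-commensurable case the paper sharpens the reduction you sketch: it takes $\Lambda_n$ to be a \emph{maximal} arithmetic lattice containing $\Gamma_n$, which is automatically congruence, and then uses that the normalised sum with $|f|$ is monotone under passage to an overgroup. Your phrase ``a congruence lattice of comparable covolume'' is not quite sufficient: commensurability alone does not give the needed inequality, one really wants $\Lambda_n \supset \Gamma_n$. Once that containment is in place, $\vol(\Lambda_n\bs G)\to\infty$ follows from pairwise non-commensurability together with Borel's finiteness of arithmetic lattices of bounded covolume, and the congruence case applies to $\Lambda_n$.
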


\begin{proof}
  If $\Gamma$ is an arithmetic lattice in $\PGL_2(\RR)$ or $\PGL_2(\CC)$ then an element $\gamma \in \Gamma$ is hyperbolic if and only if it is semisimple and of infinite order.  In the proof of \cite[Theorem 1.8]{fraczyk}, starting form the lines (10.7-10.9) the author bounds the sum 
\begin{equation}\label{eqUpperBound}\sum_{\substack{[\gamma]_{\Gamma}\\ \textrm{non torsion}}}\vol(\Gamma_\gamma\bs G_\gamma)\mathcal O_\gamma(f)
\end{equation} for congruence arithmetic lattices. The line (10.7) of \cite[p. 67]{fraczyk} is the ad\`elic version of the last sum where we group together the classes conjugate over $\mathrm{PA}^\times(k)$, where $\mathrm{PA}^\times$ is the group used to construct the lattice $\Gamma$ as explained in Section \ref{lattices}. The passage between the ad\`elic and classical trace formula is explained in \cite[Theorem 4.21]{fraczyk}. Proceeding as in \cite[p. 67-69]{fraczyk} we obtain the bound 
$$\sum_{\substack{[\gamma]_{\Gamma}\\ \textrm{non torsion}}}\vol(\Gamma_\gamma\bs G_\gamma)\mathcal O_\gamma(f)\ll \vol(\Gamma\bs G)^{0.986}.$$
Any hyperbolic conjugacy class $[\gamma]_\Gamma$ is non-torsion so we can deduce the that the sum \eqref{sum_hyp_integrals} converges to 0 as $\vol(\Gamma\bs X)\to\infty$ and $\Gamma$ is a congruence arithmetic lattice. In order to establish the convergence for sequences of pairwise non-commensurable arithmetic lattices $(\Gamma_n)_{n\in\NN}$ we choose for each $n$ a maximal arithmetic lattice $\Lambda_n$ containing $\Gamma_n$.  It is always a congruence arithmetic lattice. We have 
\begin{align*}\frac{1}{\vol(\Gamma_n\bs X)}\left| \sum_{[\gamma]_{\Gamma_n}\in U}\vol((\Gamma_n)_\gamma\bs G_\gamma)\mathcal O_f(\gamma) \right|&\leq \\
\frac{1}{\vol(\Gamma_n\bs X)}\sum_{[\gamma]_{\Gamma_n}\in U}\vol((\Gamma_n)_\gamma\bs G_\gamma)\mathcal O_{|f|}(\gamma) &\leq \\
  \frac{1}{\vol(\Lambda_n\bs X)}\sum_{[\gamma]_{\Lambda_n}\in U}\vol((\Lambda)_\gamma\bs G_\gamma)\mathcal O_{|f|}(\gamma)&=o(1).
\end{align*}
\end{proof}

%%%%%%%%%%%%%%%%%%%%%%%%%%%%%%%%%%%%%%%%%%%%%%%%%%%%%%%%%%%%%%%%%%%%%%%%%%%%%%%%

\section{Structure of the singular locus of closed hyperbolic orbifolds} \label{sec_sing}

To be able to deduce from the sole Benjamini--Schramm convergence of a sequence of orbifolds further asymptotic results on topological invariants we need a fine metric description of the singular locus. The results in this section provide it; they are not really original but precise statements such as we need are not found in the litterature. As usual our main tool is the Margulis lemma. 

\begin{theo} \label{margulis}
  For every $n \ge 2$ there exists $\eps = \eps(n) > 0$ such that the following holds. Let $\Gamma$ be a discrete subgroup of isometries of $\HH^n$, then for any $x \in \HH^n$ the subgroup
  \[
  \Gamma_\eps := \langle \gamma \in \Gamma : d(x, \gamma x) \le \eps \rangle
  \]
  is virtually abelian.
\end{theo}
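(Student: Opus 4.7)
The plan is to follow the classical Zassenhaus--Kazhdan--Margulis approach, combining the Zassenhaus neighborhood theorem in the Lie group $G = \isom(\HH^n)$ with a compactness argument on the point stabiliser $K := K_x$ (conjugate to $\mathrm{O}(n)$, hence compact), followed by the classification of elementary subgroups of $\isom(\HH^n)$.

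First I will invoke the Zassenhaus lemma: there is an open neighborhood $V_0$ of $1 \in G$ such that $\langle \Delta \cap V_0 \rangle$ is nilpotent for every discrete subgroup $\Delta \subset G$. Using compactness of $K$ I shrink $V_0$ to a $K$-conjugation invariant symmetric neighborhood $V$ with $V \cdot V \subset V_0$. The Cartan decomposition $G = K \exp(\mathfrak{p})$, with $\mathfrak{g} = \mathfrak{k} \oplus \mathfrak{p}$, implies that $\{g : d(x, gx) \le \eps\} \subset KV$ for $\eps$ small enough, since $d(x, gx)$ is comparable to the norm of the $\mathfrak{p}$-component of $g$.

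Then I will apply a pigeonhole argument: cover $K$ by finitely many translates $k_1 V, \ldots, k_m V$ and colour each generator $\gamma$ of $\Gamma_\eps$ by the index $i(\gamma)$ with $\gamma \in k_{i(\gamma)} V$. The subgroup $H \subset \Gamma_\eps$ generated by the products $\gamma_1^{-1} \gamma_2$ of equally coloured generators has all generators in $V^{-1} k_i^{-1} k_i V = V \cdot V \subset V_0$, hence $H$ is nilpotent by the Zassenhaus lemma. A standard bookkeeping argument---using that $\Gamma \cap \{g : d(x, gx) \le \eps\}$ is finite by discreteness and that there are at most $m$ colour classes, then passing to the normal core of $H$ in $\Gamma_\eps$---yields that $\Gamma_\eps$ is virtually nilpotent.

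Finally, any finitely generated discrete nilpotent subgroup $N \subset \isom(\HH^n)$ is in fact virtually abelian, since it is elementary: it fixes either a point of $\HH^n$ (so $N$ is finite), a single boundary point (so $N$ sits inside the parabolic stabiliser $\mathrm{O}(n-1) \ltimes \RR^{n-1}$, hence is virtually $\ZZ^k$ for some $k \le n-1$ by Bieberbach's theorem), or an unordered pair of boundary points (so $N$ sits in $\RR \times \mathrm{O}(n-1)$, hence is virtually $\ZZ$). Combining with the previous paragraph gives that $\Gamma_\eps$ is virtually abelian. The main technical obstacle is the bookkeeping in the pigeonhole step, namely ensuring that the normal core of $H$ retains finite index in $\Gamma_\eps$; this uses the finiteness of the generating set and the $K$-conjugation invariance of $V$, but requires care since $H$ is not a priori normal in $\Gamma_\eps$.
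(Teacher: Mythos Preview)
The paper does not prove this statement at all; it is quoted as the classical Margulis Lemma and used as a black box throughout Section~\ref{sec_sing}. So there is no ``paper's own proof'' to compare against.

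On the merits: your overall strategy is the standard one (Zassenhaus neighbourhood in $G$, compactness of the point stabiliser $K$, then the classification of elementary discrete subgroups of $\isom(\HH^n)$ to upgrade ``virtually nilpotent'' to ``virtually abelian''), and the last of these three steps is carried out correctly. The gap is in the middle step. Colouring the \emph{generators} $\gamma\in S=\Gamma\cap\{d(x,\cdot\,x)\le\eps\}$ and setting $H=\langle\gamma_1^{-1}\gamma_2 : i(\gamma_1)=i(\gamma_2)\rangle$ does give $H\subset\langle\Gamma\cap V_0\rangle$ nilpotent, but there is no reason for $H$ to have finite index in $\Gamma_\eps$: if each colour class is a singleton then $H$ is trivial, yet $\Gamma_\eps$ can certainly be infinite. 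Finiteness of $S$ and $K$-invariance of $V$ do not by themselves repair this, and passing to a normal core only makes the index larger.

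The classical argument colours not the generators but the \emph{prefixes} $w_j=\gamma_{i_1}\cdots\gamma_{i_j}$, $0\le j\le m$, of an arbitrary word of length $m+1$ in $S$, by their approximate $K$-part (well defined because $d(x,w_jx)\le (m{+}1)\eps$). Two prefixes then share a colour, so some nonempty subword $w_i^{-1}w_j$ lies in $V_0$; hence every word of length $>m$ can be shortened modulo the normal subgroup $N$ generated by $\Gamma_\eps\cap V_0$, giving $[\Gamma_\eps:N]\le |S|^m$. One chooses $V_0$ invariant under conjugation by the compact set $\{g:d(x,gx)\le (m{+}1)\eps\}$ so that $N=\langle\Gamma_\eps\cap V_0\rangle$ is already normal and hence nilpotent by Zassenhaus. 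With this correction your sketch becomes the standard proof.
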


In the sequel we will only work in 2 or 3-dimensional hyperbolic space, and we let $\eps$ denote a Margulis constant which is valid for both cases. Recall that $O_{\le \eps}$ stands for the $\eps$-thin part of an orbifold $O$, for which we use the following definition: if $O = \Gamma \bs X$ where $X$ is the orbifold universal cover and we assume $X$ to be CAT(0) then  
\begin{equation} \label{def_thinpart}
  O_{\le \eps} = \Gamma \bs \{ \tilde x \in X :\: \exists \gamma \in \Gamma \setminus\{\mathrm{Id}\},\, d(\tilde x, \gamma\tilde x) \le \eps \}
\end{equation}
which includes the singular locus of $O$---note that in the litterature, e.g. in \cite{BMP}, a different convention is often used where only points with large stabilisers are included. The closure of the complement of $O_{\le \eps}$ (the $\eps$-thick part) will be denoted by $O_{\ge \eps}$.

In fact we need to tweak a bit the definition of the thin part around that part of the singular locus where the cone angle is $\pi$: around these vertices or geodesics we put a collar whose width is $\eps/6$ (instead of $\eps/2)$.

%%%%%%%%%%%%%%%%%%%%%%%%%%%%%%

\subsection{2-dimensional orbifolds}

In $\PGL_2(\RR)^+$ all the virtually abelian discrete subgroups are given by the following list: 
\begin{enumerate}
\item An infinite cyclic group generated by an hyperbolic or parabolic isometry;

\item \label{isolated_cone_pt} A finite cyclic group generated by an elliptic isometry;

\item \label{pair_half_turns} An infinite dihedral group generated by two elliptic isometries of order 2. 
\end{enumerate}
As a first consequence we see that the singular locus of an orientable hyperbolic 2-orbifold consists only of {\em cone points}, that is all non-manifold points have a neighbourhood which is isometric to the quotient of a disc by a finite cyclic group. 

In addition we can deduce from this classification a metric description of the singular locus. We need the following notation: given an elliptic isometry $\gamma$ with fixed point $x$ and rotation angle $\theta$, let $\ell(\theta, \eps)$ be the smallest $\ell$ such that $d(y, \gamma y) \ge \eps$ for $d(x, y) = \ell$. Similarly, given a hyperbolic isometry $\gamma$ of minimal displacement $\ell$ we define $r(\ell, \eps)$ to be the minimal distance from its axis at which an hyperbolic isometry translates of at least $\eps$. 

\begin{lem} \label{2dim_sing}
  Let $O = \Gamma \bs \HH^2$ be an orientable hyperbolic 2-orbifold and $x$ a point in its singular locus. Then $x$ is an isolated cone point and one of the following possibilities hold:
\begin{enumerate}
 \item If its angle is $2\pi/m$ with $m \ge 3$ then there is no other singular point in the ball $B_O(x, \ell)$ where $\ell = \ell(2\pi/m, \eps)$. 

\item If the angle is equal to $\pi$ then either there is no other singular point within distance $\ell(\pi, \eps)$, or there is one (and its cone angle is also $\pi$) at distance $\ell_x < \ell(\pi, \eps)$ but no other within distance $r(\ell_x, \eps)$ of $x$.
\end{enumerate}
\end{lem}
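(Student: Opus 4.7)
The plan is to combine the Margulis Lemma (Theorem~\ref{margulis}) with the classification of virtually abelian discrete subgroups of $\PGL_2(\RR)^+$ listed just before the statement. First, any singular point of $O$ is the image of some $\tilde x \in \HH^2$ whose stabiliser $\Gamma_{\tilde x}$ is non-trivial. Since $O$ is orientable we have $\Gamma \subset \PGL_2(\RR)^+$, so $\Gamma_{\tilde x}$ consists of elliptic rotations fixing $\tilde x$, and discreteness forces it to be finite cyclic. In particular every singular point is an isolated cone point of angle $2\pi/m$ for some $m \geq 2$, which is already the first claim.

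For the quantitative assertions I argue by contradiction. Assume $y \neq x$ is another singular point with $d(x, y) < \ell(2\pi/m, \eps)$, choose lifts $\tilde x, \tilde y$ realising this distance, and let $\alpha, \beta$ be generators of the respective point stabilisers, with $\alpha$ of angle $2\pi/m$. By the very definition of $\ell(\cdot, \cdot)$ we have $d(\tilde y, \alpha \tilde y) < \eps$, and of course $d(\tilde y, \beta \tilde y) = 0$, so both $\alpha$ and $\beta$ lie in $\Gamma_\eps$ based at $\tilde y$. Theorem~\ref{margulis} yields that $\langle \alpha, \beta\rangle$ is virtually abelian, and since $\alpha, \beta$ are both elliptic the classification leaves only two options: $\langle \alpha, \beta\rangle$ is finite cyclic (forcing $\tilde x = \tilde y$, a contradiction) or it is infinite dihedral (forcing both $\alpha$ and $\beta$ to be half-turns). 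When $m \geq 3$ the dihedral option is also excluded, proving (1); when $m = 2$ we must be in the dihedral option, so $y$ too carries cone angle $\pi$, proving the first part of (2).

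To finish the second part of (2), I would suppose that there are two distinct singular neighbours $y, z$ of $x$ with $d(x, y) = \ell_x < \ell(\pi, \eps)$ and $d(x, z) < r(\ell_x, \eps)$. By the previous step $y$ and $z$ both have cone angle $\pi$. Pick matching lifts $\tilde x, \tilde y, \tilde z$ and let $\alpha, \beta, \gamma$ be the corresponding half-turns. The product $\alpha\beta$ is then a hyperbolic translation of length $2\ell_x$ along the geodesic through $\tilde x$ and $\tilde y$. The bound $d(\tilde x, \tilde z) < r(\ell_x, \eps)$ is designed so that $\tilde z$ lies close enough to that axis for $d(\tilde z, \alpha\beta \tilde z) < \eps$ to hold, which together with $\gamma \tilde z = \tilde z$ puts $\langle \gamma, \alpha\beta\rangle$ inside $\Gamma_\eps$ based at $\tilde z$. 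Applying Theorem~\ref{margulis} and the classification to this pair, which mixes a hyperbolic and an elliptic element, forces $\gamma$ to be a half-turn whose fixed point $\tilde z$ lies on the axis of $\alpha\beta$. Hence $\alpha, \beta, \gamma$ all belong to a common infinite dihedral subgroup, whose half-turn fixed points are spaced along the axis at multiples of $\ell_x$, so $d(\tilde x, \tilde z) \geq \ell_x$, contradicting $r(\ell_x, \eps) \leq \ell_x$ (which should be built into the definition of $r$).

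The main obstacle I foresee is this last step: verifying that $r(\ell_x, \eps)$ is calibrated so that the collinearity forced by the infinite dihedral structure indeed produces a quantitative contradiction. This is essentially a geometric bookkeeping exercise linking the Margulis constant $\eps$ to the functions $\ell$ and $r$ that govern the thin-part decomposition, but it is where the precise form of the statement of the lemma becomes essential.
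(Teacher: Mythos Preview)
Your treatment of (1) and of the first assertion in (2) is essentially the paper's own argument: apply the Margulis lemma at the second elliptic fixed point and read off the classification of virtually abelian subgroups.

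The last step of (2), however, has a genuine gap, and it is exactly the one you flag. The inequality $r(\ell_x,\eps)\le \ell_x$ on which your contradiction rests is \emph{false}: by definition $r(\ell,\eps)$ is the tube radius around the axis of a hyperbolic isometry of translation length $\ell$, and this radius tends to infinity as $\ell\to 0$. So for short $\ell_x$ one has $r(\ell_x,\eps)\gg \ell_x$, and knowing only that $\tilde z$ sits on the axis at distance $\ge \ell_x$ from $\tilde x$ does not contradict $d(\tilde x,\tilde z)<r(\ell_x,\eps)$.

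The paper closes this step differently, and without any such inequality. One first uses that $y$ is the \emph{closest} singular point to $x$ to show that the setwise stabiliser in $\Gamma$ of the geodesic through $\tilde x,\tilde y$ is exactly $\langle\alpha,\beta\rangle$: any element of that stabiliser not fixing $\tilde x$ would, composed with $\alpha$, produce an elliptic fixed point strictly closer to $\tilde x$ than $\tilde y$. This maximality is the missing ingredient in your argument. Once it is in hand, the Margulis step at $\tilde z$ forces the stabiliser of $\tilde z$ into the axis stabiliser, hence into $\langle\alpha,\beta\rangle$; so $\tilde z$ is a $\langle\alpha\beta\rangle$--translate of $\tilde x$ or $\tilde y$, and therefore $z$ equals $x$ or $y$ in the quotient $O$. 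That is the correct contradiction: there is no \emph{third} singular point, not a metric violation in $\HH^2$. (A minor side issue: $\alpha\beta$ has translation length $2\ell_x$, so ``$d(\tilde z,\alpha\beta\tilde z)<\eps$'' really requires distance $<r(2\ell_x,\eps)$ from the axis rather than $r(\ell_x,\eps)$; the paper is equally loose on this point, and it affects only the constant.)
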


\begin{proof}
  Let $\Gamma x\in O$ be as in the statement, with $x\in\HH^2$. Then $x$ is a fixed point of a non-trivial element of $\Gamma$, and it follows that the subgroup
  \[
  \Gamma_x^\varepsilon = \{ \gamma \in \Gamma :\: d(x, \gamma x) \le \eps \}
  \]
  must be one of those described in (\ref{isolated_cone_pt}) or (\ref{pair_half_turns}) at the beginning of this section; let $\gamma_0$ be a generator (with minimal rotation angle) of the cyclic subgroup fixing $x$ and $m > 1$ its order.

  In any case $x$ lies above a conical point in $O$. Assume now that $m \ge 3$; then $\Gamma_x = \langle \gamma_0 \rangle$ and by the Margulis lemma there is no other fixed point of a non-trivial element in $\Gamma$ within the set
  \[
  C = \{ y \in \HH^2 :\: d(y, \gamma_0 y) \le \eps) \}.
  \]
  By definition the ball $B_{\HH^2}(x, \ell(2\pi/m, \eps))$ is contained in $C$, so it contains no other singular point. 

  If $m = 2$ and there is another elliptic fixed point $x' \in \HH^2$ with $d(x, x') \le \ell(\pi, \eps)$ then we might assume that $x'$ is the closest such point. By the previous paragraph any nontrivial $\gamma_0' \in \Gamma$ fixing $x'$ must be of order $2$. Let $\eta = \gamma_0\gamma_0'$. It is a hyperbolic isometry with axis containing the geodesic $\alpha$ joining $x$ to $x'$ and translation distance $2d(x, x')$. Write $\Gamma_\alpha$ for the setwise stabilizer of $\alpha$ in $\Gamma$. For every $\gamma\in\Gamma_\alpha$ not fixing $x$ we will have $d(x,\gamma x)\geq 2d(x,x')$ as otherwise $\gamma_0\gamma$ would have a fixed point closer to $x$ than $x'$.  We deduce that $\Gamma_\alpha=\langle\gamma_0, \gamma_0'\rangle$.  The former is a maximal virtually abelian subgroup of $\Gamma$ (it is an intersection of $\Gamma$ with the normaliser of a split torus). The Margulis lemma now implies that within the ball $B_{\HH^2}(x, \ell(\pi, \eps))$ (resp. $B_{\HH^2}(x, r(\ell_x, \eps))$) any other elliptic fixed point must be a translate of either $x$ or $x'$ by a power of $\eta$, as any such point is moved by at most $\eps$ by $\gamma_0$ (resp. $\eta$) and hence its stabiliser in $\Gamma$ must belong to $\Gamma_\alpha$. 
\end{proof}

%%%%%%%%%%%%%%%%%%%%%%%%%%%%%%

\subsection{3-dimensional orbifolds}

\subsubsection{Description of the singular locus}

The list of discrete virtually abelian subgroups of $\PGL_2(\CC)$ is long enough to make us avoid giving a complete description. Rather, we will assume that $\Gamma$ is a cocompact lattice in $\PGL_2(\CC)$ and $\Lambda$ a maximal virtually abelian subgroup of $\Gamma$ which contains torsion elements (which is all we need to prove Theorem \ref{Betti}). If $\Lambda$ contains a hyperbolic element $\gamma$ then it must normalise $\langle\gamma\rangle$, so it is contained in the normalizer of a maximal torus. Any such normalizer is isomorphic to $\CC^\times \rtimes \ZZ/2$. Otherwise $\Lambda$  contains only elements if finite order and so by Burnside's theorem it must be a finite subgroup of the maximal compact $\PU(2)$. It follows that $\Lambda$ is one of the following groups:
\begin{enumerate}
\item $\langle \gamma, \eta \rangle \cong \ZZ \times \ZZ/m$ where $\gamma, \eta$ are respectively hyperbolic and elliptic isometries sharing the same axis;

\item $\langle \gamma, \eta, \rho \rangle \cong (\ZZ \times \ZZ/m) \rtimes \ZZ/2$ where $\eta, \gamma$ are as above (with $\eta$ possibly trivial) and $\rho$ is an elliptic of order 2 with axis orthogonal to that of $\gamma$ or $\eta$;

\item One of the finitely many non-dihedral finite subgroups of $\PU(2)$. 
\end{enumerate}
We see from this description that the singular locus of an hyperbolic 3--orbifold consists of closed geodesics (which we'll call {\em singular geodesics}), which can intersect each other. A singular point not on the intersection of two singular geodesics has a neighbourhood isometric to the quotient of a ball by a rotation; the angle of the latter we will call the cone angle of the singular geodesic. We will call a vertex which is at the intersection of two or more singular geodesics a {\em vertex} of the singular locus. 

Together with the Margulis lemma the list above allows us to give the following metric description of the singular locus (see also \cite[Corollary 6.3]{BMP} for a more geometric description, and loc. cit., Fig. 5 on p. 33 for illustrations). This description is analogous to the situation from Lemma \ref{2dim_sing}. 

\begin{lem} \label{3dim_sing}
  Let $O$ be a compact orientable 3-dimensional hyperbolic orbifold and $\Sigma$ its singular locus. Let $x \in \Sigma$ be a vertex. Then one of the two following possibilities hold. 
  \begin{enumerate}
  \item \label{exception} The $\eps/2$-neighbourhood of $x$ is isometric to one of a finite list of orbifolds, whose singular locus has only one vertex and all singular geodesics go through $x$. 

  \item \label{dihedral} There is at most one other singular vertex $x'$ within distance $\eps/2$ of $x$; $x$ and $x'$ are joined by a singular geodesic $c$ of length $\ell$ and cone angle $2\pi/m$, there are two singular geodesics with cone angle $\pi$ and orthogonal to $c$ each going through one of $x$ or $x'$. There are no further components of the singular locus within distance $\max(\ell(2\pi/m, \eps), \ell(r, \eps))$ of $x$ and $x'$. 
  \end{enumerate}
  Moreover if two non-intersecting singular geodesics of $O$ are within distance $\eps/2$ of each other then both have angle $\pi$. 
\end{lem}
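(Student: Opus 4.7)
The plan is to parallel the proof of Lemma \ref{2dim_sing}, replacing the simple $2$-dimensional list by the (longer but still short) list of maximal virtually abelian subgroups of $\Gamma\subset\PGL_2(\CC)$ containing torsion, recalled just before the lemma. Given a vertex $x\in\Sigma$, fix a lift $\tilde x\in\HH^3$; because $x$ is a genuine vertex the stabilizer $\Gamma_{\tilde x}$ is a non-cyclic finite subgroup of $\PU(2)$. First I would apply the Margulis lemma (Theorem \ref{margulis}) to the subgroup $\Gamma^\eps_{\tilde x}:=\langle\gamma\in\Gamma: d(\tilde x,\gamma\tilde x)\le\eps\rangle$, which is virtually abelian, to embed it in some maximal virtually abelian $\Lambda\supset\Gamma_{\tilde x}$. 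Applying the classification and ruling out the abelian type $\ZZ\times\ZZ/m$ (which has no non-cyclic subgroup), $\Lambda$ must be either a non-dihedral finite subgroup of $\PU(2)$ (i.e.\ one of the finitely many conjugacy classes of tetrahedral, octahedral, icosahedral type), or of the form $(\ZZ\times\ZZ/m)\rtimes\ZZ/2$.

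In the non-dihedral case, any non-cyclic finite subgroup of $\PGL_2(\CC)$ has a unique fixed point in $\HH^3$, so the inclusion $\Gamma_{\tilde x}\subset\Lambda$ forces equality. The Margulis lemma then guarantees that $B(\tilde x,\eps/2)$ embeds in $O$ as $\Lambda\bs B(\tilde x,\eps/2)$, with all singular geodesics (the images of the rotation axes of $\Lambda$) meeting at $x$, producing the finite list in case (\ref{exception}). In the dihedral case, write $\Lambda=\langle\gamma,\eta,\rho\rangle$ with $\eta$ of order $m$ sharing an axis $\tilde c$ with the (possibly trivial) hyperbolic $\gamma$ and $\rho$ a half-turn whose axis is orthogonal to $\tilde c$ at $\tilde x$. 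The order-two elements of $\Lambda$ fix either the points $\gamma^k\tilde x$ or the midpoint of $[\gamma^k\tilde x,\gamma^{k+1}\tilde x]$ on $\tilde c$; these project in $O$ to at most two vertices $x,x'$ on the singular geodesic $c=\Lambda\bs\tilde c$ of cone angle $2\pi/m$, joined by a segment of $c$, with the axes of $\rho$ and $\gamma\rho$ descending to two cone-angle-$\pi$ geodesics orthogonal to $c$ through $x$ and $x'$. The radii $\ell(2\pi/m,\eps)$ and its companion in the statement are tailored so that the Margulis lemma, applied at the appropriate basepoint, rules out any $\gamma\in\Gamma\setminus\Lambda$ contributing further singular structure in the claimed neighbourhood, yielding case (\ref{dihedral}).

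The final assertion follows from the same classification applied to a pair of rotations $\eta_1,\eta_2$ whose axes, at distance $\le\eps/2$ in $\HH^3$, lift two non-intersecting singular geodesics $c_1,c_2$: they generate a virtually abelian subgroup by Margulis, and since $\eta_1,\eta_2$ have distinct axes, inspection of the three classes shows that only $\Lambda\cong(\ZZ\times\ZZ/m)\rtimes\ZZ/2$ can accommodate them, and in this group the only torsion elements with axes different from $\tilde c$ are the half-turns $\gamma^k\rho$; hence both $\eta_1,\eta_2$ are of order $2$ and $c_1,c_2$ have cone angle $\pi$. The main obstacle will be the bookkeeping around radii: one needs to verify that Margulis applied at $\tilde x$ (and at the midpoint lift of $x'$) controls displacement throughout the claimed neighbourhood, so that no stray element of $\Gamma$ outside $\Lambda$ can intrude an additional singular component; once this is set up, the split into the Tet/Oct/Ico list on the one hand and the dihedral one-parameter family on the other is routine.
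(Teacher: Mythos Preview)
Your approach is essentially the same as the paper's: lift the vertex, identify the local group as either one of the finitely many non-dihedral finite subgroups of $\PU(2)$ or a dihedral $(\ZZ\times\ZZ/m)\rtimes\ZZ/2$, and apply the Margulis lemma to exclude further singular structure nearby (the paper works directly with the point stabiliser $\Pi=\Gamma_{\tilde x}$ rather than passing to a maximal virtually abelian $\Lambda$, but this is cosmetic). Two minor remarks: the paper also observes that dihedral vertices with $m\le 5$ and $\ell\ge\eps/2$ can be absorbed into the finite list of case~\eqref{exception}, which disposes of part of the radius bookkeeping you flag as the main obstacle; and in your argument for the ``moreover'' clause (which the paper's proof does not actually spell out) the phrase ``distinct axes'' should be ``non-intersecting axes'', since distinct axes alone do not exclude the tetrahedral/octahedral/icosahedral groups---what rules them out is that all rotation axes in those groups pass through the common fixed point, so lifts of non-intersecting singular geodesics cannot land there.
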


\begin{proof}
  Let $O = \Gamma \bs \HH^3$ a closed hyperbolic 3--orbifold. Let $x$ be a vertex in the singular locus of $O$ and $\Pi$ the subgroup of $\Gamma$ fixing a lift $\tilde x$ of $x$ to $\HH^3$. Then $\Pi$ is either a dihedral group $\ZZ/m \rtimes \ZZ/2$ or one of finitely many finite non-dihedral subgroups of $\PU(2)$, according to the list of virtually abelian subgroups of $\Gamma$ above. We note that under the condition in \eqref{exception}.

  If the vertex is as in \eqref{exception} and $\gamma \in \Gamma$, $\gamma \not\in \Pi$ is an elliptic isometry of order $m$ then as (by the Margulis Lemma) $\Pi$ contains all isometries moving $\tilde x$ by more than $\eps$ any fixed point of $\gamma$ must be at distance at least $\ell(2\pi/m, \eps) \ge \ell(\pi, \eps) = \eps/2$ of $\tilde x$. Similarly any hyperbolic isometry in $\Gamma$ must move $\tilde x$ by at least $\eps$. Hence the quotient $\Pi \bs B(\tilde x, \eps/2)$ embeds into $O$.

  If the vertex has a dihedral stabiliser as in \eqref{dihedral} let $\gamma$ be a generator of the $\ZZ/m$-subgroup and $\eta$ a generator of the $\ZZ$-subgroup commuting with $\gamma$. Then we might assume that either $\ell < \eps/2$ or $m > 5$ (otherwise we can add its neighbourhood to the finite list in \eqref{exception}). Then any elliptic element of $\Gamma$ which does not normalise $\langle\gamma\rangle$ cannot fix a point in $B(\tilde x, \eps)$ (otherwise it and $\gamma$ would generate a subgroup moving a point by less than $\eps$ but not in the list given above, which is not possible by the Margulis Lemma). Similarly it cannot fix a point within $\ell(\ell, \eps)$ of the axis of $\eta$. 
\end{proof}

%%%%%%%%%%%%%%%%%%%%%%%%%%%%%%

\subsubsection{Smoothing the thick part}

Let $C = (C_0, C_1, \ldots) \in [0, +\infty[^\NN$. As (a slight variation of) the definition in \cite{Lueck_Schick} we say that a Riemannian manifold has {\em $C$-bounded geometry} if its injectivity radius is at least $C_0$, the normal geodesic flow up to $C_0$ gives coordinates for a collar neighbourhood of the boundary, and the $k$th derivatives of the metric tensor and its inverse (in normal coordinates) are bounded in sup norm by $C_k$. In this section we prove the following lemma. 

\begin{lem} \label{smoothing}
  There exists $C$ such that for any hyperbolic 3--orbifold $O$ there exists a smooth submanifold $O'$ such that: 
  \begin{itemize}
  \item $O_{\ge \eps} \subset O'$ and this is an homotopy equivalence;

  \item $O'$ is of $C$-bounded geometry. 
  \end{itemize}
\end{lem}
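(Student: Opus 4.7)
The plan is to replace the irregular boundary of $O_{\ge \eps}$ by a smooth hypersurface pushed slightly into the thin part, realized as a regular level set of a uniformly bounded mollification of the characteristic function of $O_{\le \eps/2}$.

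First, I would catalogue the connected components of $O_{\le \eps}$. By Lemma \ref{3dim_sing} together with the preceding classification of maximal discrete virtually abelian subgroups of $\PGL_2(\CC)$, each component $T$ belongs to one of finitely many metric isomorphism types, each with only a small set of continuous parameters: a Margulis tube around a short closed geodesic, a thin neighbourhood of a singular geodesic of cone angle $2\pi/m$, a thin neighbourhood of a dihedral configuration as in case \eqref{dihedral} of Lemma \ref{3dim_sing}, or a neighbourhood of one of the finitely many exceptional vertices in case \eqref{exception}. The Margulis lemma additionally ensures that distinct components of the thin part are separated by a uniform distance, so the smoothing can be performed locally and independently around each one.

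Second, for each local type I would construct a smooth nonnegative function $\rho_T$ supported in $T$, equal to $1$ on the shrunken core $T \cap O_{\le \eps/2}$, with derivatives of all orders bounded by absolute constants. For Margulis tubes and singular geodesic tubes this is immediate: precompose a fixed bump function with the signed distance to the core axis, which is smooth and of bounded derivatives on $T$. For dihedral and exceptional vertex configurations, the space of possible local models is parametrised by a compact set once the finite list of isotropy groups is fixed and the bounded geometric parameters (cone angles and intervertex distances, controlled from above and below by the Margulis lemma) are recognised; a uniform smooth construction therefore exists by a finite case analysis. I would then form the globally defined $\rho = \sum_T \rho_T$, which is smooth because the supports are disjoint, pick a regular value $c \in (0,1)$ common to all local models, and set $O' = O \setminus \{\rho \ge c\}$. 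This is a smooth submanifold with smooth boundary $\{\rho = c\}$ lying strictly inside $O_{\le \eps}$, hence it contains $O_{\ge \eps}$; the inclusion is a homotopy equivalence via the outward gradient flow of $\rho$ (which retracts $O'$ onto $O_{\ge \eps}$ through nested smooth superlevel sets).

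The main obstacle is producing the uniform smoothing near the exceptional vertices of case \eqref{exception} of Lemma \ref{3dim_sing} and near the dihedral configurations, where several pieces of the singular locus meet and the local geometry of $\partial O_{\ge \eps}$ is intricate. The point that avoids any direct computation is that the list of possible isotropy types is finite and that, by the Margulis lemma, every continuous parameter entering these local models ranges over a compact set bounded away from degeneration. A compactness argument then produces uniform $C^k$ bounds on the local smoothings for every $k$, which gives the $C$-bounded geometry of $O'$ claimed in the lemma; this compactness-based verification is the technical content deferred to Appendix \ref{appendix_smooth}.
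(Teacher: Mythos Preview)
Your level-set approach is a genuine alternative to the paper's, but the key uniformity step has a gap. You assert that ``every continuous parameter entering these local models ranges over a compact set bounded away from degeneration'' and invoke compactness for the uniform $C^k$ bounds. This is not correct: in the dihedral configuration of Lemma~\ref{3dim_sing}\eqref{dihedral} the cone order $m$ of the connecting geodesic is unbounded and the intervertex distance $\ell$ can be arbitrarily small; the Margulis lemma bounds neither away from degeneration. As $m\to\infty$ or $\ell\to 0$ the radius of the tube around the central geodesic tends to infinity, so the family of local models is not precompact in any useful sense, and you cannot extract uniform bounds on $\rho$ (or, what actually matters, a uniform lower bound on $|\nabla\rho|$ along $\{\rho=c\}$) by soft compactness alone.

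The paper proceeds differently. It observes that $\partial O_{\ge\eps}$ is already smooth except along curves where two tube boundaries meet, reduces (after disposing of the finite list in case~\eqref{exception}) to the intersection of a tube $N_1$ of variable radius with the fixed $\eps/6$-tube $N_2$ around an orthogonal order-$2$ geodesic, and shows that the dihedral angle between $\partial N_1$ and $\partial N_2$ stays in a fixed interval $[\alpha_0,\pi-\alpha_0]$ uniformly in the radius. This is a one-parameter continuity argument on $r\in[0,\infty]$, compactified at $r=\infty$ via a conformal model of $\HH^3$; it succeeds precisely because the limiting configuration at $r=\infty$ is still transverse. With the angle bound in hand the paper applies the general corner-smoothing Proposition~\ref{general_smoothing}, and it is the proof of \emph{that} proposition, not a compactness verification over local models, that occupies Appendix~\ref{appendix_smooth}. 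If you want to salvage the level-set route you will still need something equivalent to this dihedral-angle bound to control the second fundamental form of $\{\rho=c\}$ near the tube overlaps.
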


We will deduce the lemma from the description of the singular locus and the following general proposition, the proof of which we give in appendix \ref{appendix_smooth}. 

\begin{prop} \label{general_smoothing}
  Let $X$ be a Riemannian $d$-manifold  and $H_1, H_2$ two open subsets whose closures have smooth boundary. Assume the following holds: 
  \begin{itemize}
  \item they intersect transversally in a compact subset; let $\alpha_0$ such that the dihedral angles at the intersection stay within the interval $]\alpha_0, \pi-\alpha_0[$. 

  \item Both manifolds $X \setminus H_i$ are of bounded geometry.
  \end{itemize}
  Then for any $\delta>0$ there exists an open subset $H$ of $X$ such that:
  \begin{enumerate}
  \item \label{contain} $H \supset H_1 \cup H_2$ and they are equal outside of the $\delta$-neighbourhood of $H_1 \cap H_2$;

  \item \label{smooth_bd} the closure of $H$ has a smooth boundary; 

  \item $X \setminus H$ is of bounded geometry; the bounds depend only on $\delta$, on the bounds on the geometry of $X$ and $X \setminus H_i$ and on $\alpha_0$.
  \end{enumerate}
\end{prop}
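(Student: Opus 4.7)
The plan is to build $H$ locally near $\partial H_1 \cap \partial H_2$ as the sublevel set of a smooth approximation of $\min(f_1, f_2)$, where $f_i$ is a signed defining function for $H_i$, and to leave $H$ equal to $H_1 \cup H_2$ away from the corner. Since $\partial H_1 \cap \partial H_2$ is compact and each $\partial H_i$ is smooth, one can find a neighbourhood $W$ of $\partial H_1 \cap \partial H_2$ and smooth signed distance functions $f_i$ on $W$, normalised so that $|\nabla f_i| \equiv 1$ and $H_i \cap W = \{f_i < 0\}$. The transversality hypothesis then reads $|\langle \nabla f_1, \nabla f_2\rangle| \le \cos\alpha_0 < 1$ on $W$, possibly after shrinking.

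For the smoothing, I fix a parameter $\eta > 0$ and a smooth, even, convex function $\phi : \RR \to \RR$ with $\phi(t) = |t|$ for $|t| \ge \eta$, $\phi(t) \ge |t|$ everywhere and $|\phi'| \le 1$. Set
\[
\tilde f := \tfrac{1}{2}\bigl(f_1 + f_2 - \phi(f_1 - f_2)\bigr) \qquad \text{on } W,
\]
so that $\tilde f \le \min(f_1, f_2)$ with equality when $|f_1 - f_2| \ge \eta$, and define
\[
H := H_1 \cup H_2 \cup \{x \in W : \tilde f(x) < 0\}.
\]
The inclusion $H \supset H_1 \cup H_2$ is immediate. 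Since $f_1 - f_2$ vanishes on $\partial H_1 \cap \partial H_2$ with gradient bounded below there (by transversality), for $\eta$ small enough the smoothing region $\{|f_1 - f_2| < \eta\} \cap W$ lies inside the $\delta$-neighbourhood of $\partial H_1 \cap \partial H_2$, proving property (\ref{contain}); and outside this region $\tilde f = \min(f_1, f_2)$ so that the new boundary already agrees with the smooth part of $\partial(H_1 \cup H_2)$.

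For property (\ref{smooth_bd}), a direct computation gives $d\tilde f = \lambda\, df_1 + (1 - \lambda)\, df_2$ with $\lambda = (1 - \phi'(f_1 - f_2))/2 \in [0, 1]$. Expanding $|d\tilde f|^2$ and using $\langle df_1, df_2 \rangle \ge -\cos\alpha_0$ yields the uniform lower bound
\[
|d\tilde f|^2 \ge \lambda^2 + (1-\lambda)^2 - 2\lambda(1-\lambda)\cos\alpha_0 \ge \sin^2(\alpha_0/2),
\]
so $\{\tilde f = 0\}$ is a smooth hypersurface. For the third property, the $C^k$ norms of $\tilde f$ are controlled by those of $\phi$ (depending only on $\eta$, hence on $\delta$ and $\alpha_0$) and those of $f_1, f_2$ (coming from the bounded geometry of $\partial H_i$). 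Together with the lower bound above, the flow of $\nabla\tilde f/|\nabla\tilde f|$ provides a collar of uniform width around the new boundary in which the metric tensor has $C^k$ norm bounded in terms of $\delta$, $\alpha_0$, and the initial bounds.

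The main obstacle is the quantitative bounded-geometry estimate in the last step: one must carefully track how derivatives of $\phi$, of the $f_i$, and of the ambient metric propagate through the implicit function and normal-flow constructions to produce constants depending only on the stated data. The bound $|d\tilde f| \ge \sin(\alpha_0/2)$ is essential here, and is precisely where the hypothesis on the dihedral angles is used: without $\alpha_0 > 0$ the gradients $\nabla f_1, \nabla f_2$ could become (anti-)parallel inside the smoothing region, causing $d\tilde f$ to vanish and the construction to break down.
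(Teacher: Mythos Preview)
Your argument is correct and takes a genuinely different route from the paper's. The paper (Appendix~\ref{appendix_smooth}) builds an explicit diffeomorphism $\varphi:[0,\delta[^2\times I\to X$ onto a neighbourhood of the corner $I=\partial H_1\cap\partial H_2$, by interpolating (via a cutoff in the angular variable $t/s$) between two normal-flow parametrisations $\varphi_i$ adapted to $H_1$ and $H_2$; once such a $\varphi$ with two-sided derivative bounds exists, smoothing is trivial in the product model. You instead work intrinsically with a defining function, replacing $\min(f_1,f_2)$ by the smooth convex combination $\tilde f=\tfrac12(f_1+f_2-\phi(f_1-f_2))$ and taking the sublevel set. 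Both proofs use the dihedral-angle hypothesis at exactly the same point: the paper needs $\alpha_0>0$ so that the interpolated frame $(V_1,V_2)$ spans a plane (its block $B$ in $D\varphi$ stays invertible), while you need it so that the convex combination $\lambda\,df_1+(1-\lambda)\,df_2$ never vanishes, yielding your clean bound $|d\tilde f|\ge\sin(\alpha_0/2)$.

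Your approach is more elementary and more standard (the smoothed-minimum trick is classical), and it makes the role of $\alpha_0$ completely transparent. The paper's parametrisation approach, on the other hand, directly produces normal-exponential-type coordinates for $X\setminus H$, which is convenient if one wants to read off the bounded-geometry constants in the specific sense of \cite{Lueck_Schick} (derivative bounds on the metric tensor in normal coordinates and a normal collar of definite width). In your write-up this last translation---from $C^k$ bounds on $\tilde f$ together with $|d\tilde f|\ge\sin(\alpha_0/2)$ to bounded geometry of $X\setminus H$ in that precise sense---is the step you flag as ``the main obstacle''; it is routine but does require spelling out that the flow of $\nabla\tilde f/|\nabla\tilde f|$ gives a collar of width depending only on the stated data, and that away from the smoothing region the injectivity radius is inherited from whichever $X\setminus H_i$ is locally relevant.
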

  
\begin{proof}[Proof of Lemma \ref{smoothing}]
  Observe first that the boundary of the thin part is smooth away from the geodesics with cone angle $\pi$ and the vertices of the singular locus, as follows from the third part of Lemma \ref{3dim_sing}. Thus the non-smooth part of $\pl O_{\ge \eps}$ comes from intersecting tubular neighbourhoods of singular geodesics and short geodesics. There are finitely many possible configurations where the geodesics are not orthogonal to each other (corresponding to case \eqref{exception} of Lemma \ref{3dim_sing}); we do not need to deal in detail with these, so the only problem left to deal with is the following: at all points in the intersection of the tubular neighbourhood $N_1$ (with varying radius) of one geodesic, and the $\eps/6$-tubular neigbourhood $N_2$ of another geodesic orthogonal to the first, the dihedral angle between $\pl N_1$ and $\pl N_2$ stays bounded away from 0 and from $\pi$\footnote{Note that the neighbourhoods corresponding to two geodesics orthogonal to a third one cannot intersect each other, because we took their radius to be $\eps/3$ and the distance between the geodesics outside the $\eps$-thin part is at least $\eps/2$}.

  To prove this note that the maximum and minimum values for these angles both are continuous functions of the radius $0 \le r < +\infty$ of $N_1$. It can be continuously extended to $r = +\infty$, the values then being those of the angle (in a conformal model of $\HH^3$) between $\pl N_1$ and the boundary at infinity of $\HH^3$. As $N_1$ and $N_2$ are never tangent to each other we see by compactness that the maximal and minimal values stay bounded away from $0$ and $\pi$. 
\end{proof}

%%%%%%%%%%%%%%%%%%%%%%%%%%%%%%%%%%%%%%%%%%%%%%%%%%%%%%%%%%%%%%%%%%%%%%%%%%%%%%%%

\section{The genus of congruence orbifolds} \label{sec_genus}

In this section we prove Theorem \ref{genus}. Let $O$ be an hyperbolic orbifold of dimension 2, which is a quotient of the hyperbolic plane $\HH^2$ by a lattice of $\mathrm{PSL}_2(\RR)$. Then the underlying topological space $|O|$ is a surface of finite type, that is it is homeomorphic to a compact surface $S$ with a finite number of points removed. The {\em genus} of $O$ is defined to be the genus of $S$. 

Suppose that $O$ has genus $g$, that it has $k$ punctures and $r$ conical singularities with angles $2\pi/m_1, \ldots, 2\pi/m_r$ (the tuple $(g, k, m_1, \ldots, m_r)$ is then called the {\em signature} of $O$). Then, computing the volume of a well-chosen fundamental polygon we get the following equality (see \cite[Theorem 10.4.2]{Beardon}): 
\begin{equation}\label{GB}
  \vol O = 2\pi\left( 2g - 2 + k + \sum_{i=1}^r \left( 1 - \frac 1{m_i} \right) \right). 
\end{equation}
From this equation we obtain the bound:  
\[
\left| g - \frac{\vol(O)}{4\pi} \right| \le \frac {k + r + 2}{4\pi}. 
\]
We now see that Theorem \ref{genus} follows from Theorem \ref{Main} together with the following proposition.

\begin{prop}
  Let $O_n$ be a sequence of hyperbolic 2--orbifolds which is Benjamini--Schramm convergent to $\HH^2$. Let $k_n, r_n$ be the number of cusps and conical points of $O_n$, respectively. Then $k_n + r_n = o(\vol O_n)$. 
\end{prop}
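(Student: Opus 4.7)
The plan is to show that every cusp and every conical point of $O_n$ contributes a component of the $\eps$-thin part $(O_n)_{\le\eps}$ of definite positive volume, covering at most two singular features per component; the bound $k_n+r_n=o(\vol O_n)$ then follows immediately from the hypothesis $\vol((O_n)_{\le\eps})=o(\vol O_n)$ given by Benjamini--Schramm convergence.

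By the Margulis lemma the thin part decomposes into disjoint components, each associated to a maximal virtually abelian subgroup of the uniformising lattice. From the classification at the beginning of Section \ref{sec_sing} and Lemma \ref{2dim_sing} these components are, in the 2-dimensional orientable case, exactly the cusp horoballs, the cone-point discs, the short-geodesic collars, and the ``peanut-shaped'' regions containing a pair of $\pi$-cone points joined by a short segment. Distinct cusps and distinct cone points of order $m\ge 3$ each live in their own component, while a dihedral pair of $\pi$-cone points shares a single component.

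I would then check a uniform lower bound on the area of each such component that contains a cusp or a cone point. For cusps this is a direct computation in the upper half-plane model: the horoball on which a parabolic translates by at most $\eps$ descends to a cusp region of area $\ge v_{\cusp}(\eps)>0$. For a cone point of angle $2\pi/m$ with $m\ge 3$ the corresponding component is the quotient of a hyperbolic disc of radius $\ell(2\pi/m,\eps)$ by a rotation of order $m$, with area
\[
\frac{2\pi}{m}\bigl(\cosh\ell(2\pi/m,\eps)-1\bigr);
\]
combining $\sinh\ell(2\pi/m,\eps)=\sinh(\eps/2)/\sin(\pi/m)$ with $\cosh\ell\sim\sinh\ell$ as $m\to\infty$ shows that this area is bounded below by a positive constant independent of $m\ge 3$, the $1/m$ factor being compensated by the growth of $\ell$. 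For $\pi$-cone points (isolated or paired) the $\eps/6$-ball around each point is included in the component and has area $\pi(\cosh(\eps/6)-1)>0$.

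Writing $c(\eps)>0$ for the minimum of the three lower bounds above, each component of $(O_n)_{\le\eps}$ contains at most two singular features (the dihedral paired case being the only multi-feature one), so
\[
k_n+r_n\le 2\cdot\#\{\text{components of }(O_n)_{\le\eps}\}\le \frac{2}{c(\eps)}\vol((O_n)_{\le\eps})=o(\vol O_n).
\]
I expect the main delicacy to be the uniform lower bound on the cone-point component area as $m\to\infty$; the other cases are routine hyperbolic trigonometry and standard Margulis-lemma bookkeeping.
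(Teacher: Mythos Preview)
Your proposal is correct and follows essentially the same strategy as the paper: associate to each cusp or cone point a region of the $\eps$-thin part with a uniform positive lower volume bound and at most two-fold overlap, then invoke Benjamini--Schramm convergence. The paper defines explicit balls $\Omega_x$ rather than working with connected components of the thin part, computes the $m\ge 3$ lower bound via an asymptotic $\ell(\theta,\eps)=-\log\theta+O(1)$ in the disc model (equivalent to your cleaner identity $\sinh\ell=\sinh(\eps/2)/\sin(\pi/m)$), and in the dihedral case invokes the collar lemma of Halpern for short geodesics rather than your more direct $\eps/6$-ball bound; these are minor differences in execution, not in approach.
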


\begin{proof}
  To prove that $r_n = o(\vol O_n)$ we associate to each conical point $x$ with angle $\theta$ the region
  \[
  \Omega_x = B(x, \ell(\theta, \eps))
  \]
  if there is no other singular point within distance $\ell(\theta, \eps)$. Otherwise let $\ell_x$ be the distance to the nearest singular point and put
  \[
  \Omega_x = B(x, r(\ell_x, \eps)).
  \]
  We will check below the following facts:
  \begin{enumerate}
  \item \label{lowbd_vol} there exists $c > 0$ such that $\vol \Omega_x > c$ for all $n$ and $x \in O_n$;

  \item \label{almost_disjoint} if $x \in O_n$ is a conical point then there is at most one conical point $x' \not= x$ such that $x \in \Omega_{x'}$;

  \item \label{thin_part} for all conical points $x \in O_n$ we have $\Omega_x \subset (O_n)_{\le \eps}$.
  \end{enumerate}
  It follows from these that:
  \[
  r_n \le \frac 1 c\sum_{x \in \Sigma_{O_n}} \vol \Omega_x \le \frac 2 c \vol\left( \bigcup_{x \in \Sigma_{O_n}} \Omega_x \right) \le \frac 2 c \vol(O_n)_{\le\eps}
  \]
  and as the right-hand side is $o(\vol O_n)$ in a BS-convergent sequence we get that $r_n = o(\vol O_n)$.  

  That \ref{thin_part} holds follows immediately from the definitions of $\ell(\theta, \eps)$ and $r(\ell, \eps)$. Point \ref{almost_disjoint}  follows from Lemma \ref{2dim_sing}. 

  It remains to prove \ref{lowbd_vol}. Let $x \in O_n$ be a singularity with cone angle $2\pi/m$ with $m > 2$, let $\tilde x$ be a lift of $x$ to $\HH^2$ and $\ell = \ell(2\pi/m, \eps)$. Then we have
  \[
  \vol(B_{O_n}(x, \ell)) = \frac 1 m B_{\HH^2}(\tilde x, \ell) \gg \frac{e^\ell}m
  \]
  so we need to prove that $e^\ell \gg m$. This follows easily from distance computations in the disk model: by definition of $\ell(\theta, \eps)$ we have that $\ell(\theta, \eps) = \log((1+r)/(1-r))$ where $0 < r < 1$ is such that $d(r, re^{i\theta}) = \eps$. It follows that
  \[
  \cosh(\eps) = 1 + \frac{2r^2|1 - e^{i\theta}|^2}{(1 - r^2)^2}
  \]
  and by standard computations we get that
  \[
  r = 1 - \frac \theta{\sqrt 2\sinh(\eps)} + O(\theta^2)
  \]
  whence it follows that
  \[
  \ell(\theta, \eps) = -\log(\theta) - c + O(\theta)
  \]
  for some constant $c$ depending on $\eps$. We finally get that $\ell \gg e^{\log(m/2\pi)} \gg m$.

  Assume now that $m = 2$ and that there is another singular point $x'$ within $\ell(2, \eps)$ of $x$. In this case the volume of $\Omega_x$ is half that of a collar around a closed geodesic of length $r(\ell_x, \eps) \ll \eps$; as the latter is bounded below (see \cite{Halpern}) so is that of $\Omega_x$.

  \medskip

  The proof that $k_n = o(\vol O_n)$ is similar: by the Margulis lemma the regions of the $\eps$-thin part where a given conjugacy class of parabolic isometries realises the injectivity radius are pairwise disjoint, and an easy hyperbolic area computation shows that the volume of such a region is bounded below. 
\end{proof}

%%%%%%%%%%%%%%%%%%%%%%%%%%%%%%%%%%%%%%%%%%%%%%%%%%%%%%%%%%%%%%%%%%%%%%%%%%%%%%%%

\section{Betti numbers of arithmetic 3--orbifolds} \label{sec_betti}

Recall that $\eps$ is a Margulis constant for $\HH^3$. Let $O$ be a 3--orbifold, then we will write $O'$ for the manifold with boundary obtained by Lemma \ref{smoothing}. We write $\Delta_\abs^1$ for the maximal self-adjoint extension of the Hodge--Laplace operator on $O'$ with absolute boundary condition. The goal of this section is to prove the following proposition, which we prove by extending the analysis at the end of section 7 in \cite{7samurai} to the orbifold case. 

\begin{prop} \label{limit_hk}
  Let $O_n$ be a sequence of closed hyperbolic 3--orbifolds which BS-converge to $\HH^3$, and let $O_n'$ be the smoothings described in Lemma \ref{smoothing}. Then for all $t > 0$ we have that 
  \[
  \limsup_{t \to +\infty} \lim_{n \to +\infty} \frac{ \otr(e^{-t\Delta_\abs^1[O_n']}) }{ \vol O_n } = 0.
  \]
\end{prop}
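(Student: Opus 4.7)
I would extend the argument for closed hyperbolic 3--manifolds at the end of Section 7 of \cite{7samurai}. Writing
\[
\otr(e^{-t\Delta_\abs^1[O_n']}) = \int_{O_n'} \tr\bigl(k_t(x,x)\bigr)\,dx
\]
for $k_t$ the integral kernel of $e^{-t\Delta_\abs^1[O_n']}$ acting on 1--forms, the idea is to split $O_n'$ into a ``deep interior'' region on which $\tr k_t(x,x)$ is approximately the heat--trace density of $\HH^3$ on 1--forms, and a ``boundary/thin'' region whose volume is negligible compared to $\vol O_n$ by the BS--convergence hypothesis. The limsup in $t$ is then absorbed by the vanishing of the first $L^2$--Betti number of $\HH^3$.

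\textbf{Step 1 (uniform heat--kernel estimates).} By Lemma \ref{smoothing} the manifolds $O_n'$ have uniformly $C$--bounded geometry. Standard parabolic estimates for manifolds with boundary of bounded geometry, together with the absolute boundary conditions defining $\Delta_\abs^1$, provide a function $A(t)$, independent of $n$, with
\[
\tr\bigl(k_t(x,x)\bigr)\le A(t)\qquad\text{for every }x \in O_n'.
\]
A Cheeger--Gromov--Taylor type finite--propagation argument (or Gaussian off--diagonal bounds applied to $k_t$ and the hyperbolic heat kernel $k_t^{\HH^3}$) yields a function $B(t,R)$ with $B(t,R)\to 0$ as $R\to\infty$ such that whenever $x\in O_n'$ satisfies $d(x,\pl O_n')\ge R$ and $\inj_{O_n}(x)\ge R$, one has
\[
\bigl|\tr\bigl(k_t(x,x)\bigr) - \tr\bigl(k_t^{\HH^3}(\tilde x,\tilde x)\bigr)\bigr|\le B(t,R)
\]
for any lift $\tilde x$ of $x$ in $\HH^3$. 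By homogeneity, $\tr(k_t^{\HH^3}(\tilde x,\tilde x))$ is a constant $\tau(t)$ depending only on $t$.

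\textbf{Step 2 (splitting and BS--convergence).} Set
\[
D_n(R) = \{x \in O_n':\ d(x,\pl O_n')\ge R\text{ and }\inj_{O_n}(x)\ge R\}.
\]
Since the construction of $O_n'$ in Lemma \ref{smoothing} only modifies $\pl O_{n,\ge\eps}$ within a uniform neighbourhood of itself, one has $\pl O_n'\subset (O_n)_{\le \eps'}$ for a universal $\eps'$, and therefore $O_n'\setminus D_n(R)\subset (O_n)_{\le R'}$ for some $R' = R'(R,\eps)$. By BS--convergence this gives $\vol(O_n'\setminus D_n(R)) = o(\vol O_n)$ for each fixed $R$. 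Combining with Step 1,
\[
\frac{\otr(e^{-t\Delta_\abs^1[O_n']})}{\vol O_n}\ \le\ \tau(t) + B(t,R) + A(t)\cdot\frac{\vol(O_n'\setminus D_n(R))}{\vol O_n}.
\]
Letting $n\to\infty$ first, then $R\to\infty$, one obtains
\[
\limsup_{n\to\infty}\frac{\otr(e^{-t\Delta_\abs^1[O_n']})}{\vol O_n}\ \le\ \tau(t).
\]
Finally, $\tau(t) \to 0$ as $t\to\infty$: since $\HH^3$ is an odd--dimensional symmetric space of non--compact type, its $L^2$--Betti numbers all vanish, equivalently the spectral measure of $\Delta^1$ on $L^2$--1--forms of $\HH^3$ has no atom at $0$ (in fact a positive spectral gap), whence the heat--trace density of $\Delta^1$ on $\HH^3$ tends to $0$ as $t\to\infty$.

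\textbf{Main obstacle.} The substantial point is the uniformity of the heat--kernel estimates in Step 1; this is precisely where the bounded--geometry conclusion of Lemma \ref{smoothing} is indispensable, since the shapes of the boundaries $\pl O_n'$, coming from intersections of tubular neighbourhoods of short and singular geodesics with various angles, could otherwise vary wildly across the sequence. Once that uniformity is available the remainder of the proof is essentially a transcription of the closed--manifold case treated in \cite{7samurai}.
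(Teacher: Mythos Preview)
Your argument is correct and actually somewhat more streamlined than the route taken in the paper. The paper proceeds in three steps: it first analyses the \emph{orbifold} heat trace $\int_{(O_n)_{\ge\eps}}\tr e^{-t\Delta^1[O_n]}(x,x)\,dx$ via the Selberg trace formula, bounding separately the hyperbolic and the elliptic orbital integrals (the latter requiring the estimate $\sum_{c\in\Sigma_n}\ell_c=o(\vol O_n)$ on the total length of singular geodesics); it then compares $e^{-t\Delta^1[O_n]}$ with $e^{-t\Delta^1[O_n']}$ on the $R$-thick part using the L\"uck--Schick estimate \cite[Theorem~2.26]{Lueck_Schick}; and finally it bounds the trace near $\partial O_n'$ via \cite[Theorem~2.35]{Lueck_Schick}. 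You bypass the orbifold intermediary entirely: on your region $D_n(R)$ the ball of radius $R$ embeds isometrically as a hyperbolic ball, so the locality estimate compares $k_t(x,x)$ directly to $k_t^{\HH^3}$, and the complement is handled exactly as in the paper's third step. Your approach avoids the trace-formula computation of the elliptic contribution altogether; the price is that you do not identify $\lim_n \otr(e^{-t\Delta_\abs^1[O_n']})/\vol O_n$ as exactly $\tau(t)$, only $\limsup_n\le\tau(t)$. For the Proposition as stated (and for the application to $b_1$) this upper bound is all that is needed, since the trace is nonnegative; if you want the actual limit to exist you should add the symmetric lower bound $\tr k_t(x,x)\ge\tau(t)-B(t,R)$ on $D_n(R)$, which your Step~1 already provides.

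One point worth making precise in writing this up: the comparison $|\tr k_t(x,x)-\tau(t)|\le B(t,R)$ on $D_n(R)$ is between a heat kernel with absolute boundary conditions on a manifold with boundary and the heat kernel on $\HH^3$; the relevant uniform estimate is exactly \cite[Theorem~2.26]{Lueck_Schick} (applied to the covering $U_n\to O_n'$ together with the analogous comparison $U_n$ versus $\HH^3$), and its applicability with constants independent of $n$ is guaranteed by the $C$-bounded geometry from Lemma~\ref{smoothing}, as you note.
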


Before giving the proof we explain how this implies Theorem \ref{Betti}: let $O_n = \Gamma_n \bs \HH^3$. By Hodge theory we have $b_1(O_n') \le \otr(e^{-t\Delta_\abs^1[O_n']})$ for all $t$, and so Proposition \ref{limit_hk} implies that
\[
\lim_{n \to +\infty} \frac{b_1(O_n')}{\vol O_n} = 0. 
\]
On the other hand we have that the orbifold fundamental group $\Gamma_n$ is a quotient of $\pi_1(O_n')$. Indeed, the universal cover of $(O_n)_{\ge \eps}$ is a cover of the connected subset $(\widehat O_n)_\eps$ of those $x \in \HH^3$ which are not displaced by less than $\eps$ by some non-trivial element of $\Gamma_n$, and $(O_n)_{\ge \eps}$ is homotopy equivalent to $O_n'$. Moreover $O_n'$ is aspherical (as the cover $(\widehat O_n)_\eps$ constructed above is) so that $H_1(O_n')$ is the abelianisation of $\pi_1(O_n')$. From these two facts it follows that $b_1(\Gamma_n) \le b_1(O_n')$, so that $b_1(\Gamma_n) = o(\vol O_n)$ as well. 

\medskip

The proof of Proposition \ref{limit_hk} is done in three steps: first we observe convergence of the part of the trace formula for $O_n$ coming from the $\eps$-thick part: see \eqref{lim_hk_thick}. The two next steps together imply that the trace of the heat kernel on $O_n'$ is asymptotically the same as that computed in \eqref{lim_hk_thick}: first we analyse the integral of the difference on the $R$-thick part and show that it limit superior is $o(R)$ (see \eqref{comphk_thick}, then we prove that the integral on the $R$-thin part of $O_n'$ asymptotically vanishes (see \eqref{near_bd}). Altogether these three steps imply that
\[
\lim_{n \to +\infty} \frac{ \otr(e^{-t\Delta_\abs^1[O_n']}) }{ \vol O_n } = \tr e^{-t\Delta^1[\HH^3]}
\]
where we denoted $\tr e^{-t\Delta^1[\HH^3]} = \tr e^{-t\Delta^1[\HH^3]}(\tilde x, \tilde x)$ for any $\tilde x \in \HH^3$. The proposition now follows from the vanishing of the first $L^2$-Betti number of $\HH^3$, which means that $\lim_{t \to +\infty} \tr e^{-t\Delta^1[\HH^3]} = 0$ (see \cite{Lueck_book}).

%%%%%%%%%%%%%%%%%%%%%%%%%%%%%%

\subsection{Trace formula on the thick part}

Let $O_n$ be a sequence as in Proposition \ref{limit_hk}. We prove here that
\begin{equation} \label{lim_hk_thick}
  \int_{(O_n)_{\ge\eps}} \tr e^{-t\Delta^1[O_n]}(x, x) dx - \tr e^{-t\Delta^1[\HH^3]} \cdot \vol O_n = o(\vol O_n).
\end{equation}
Let $\mathcal C_{n, e}$ and $\mathcal C_{n, h}$ be the sets of conjugacy classes of respectively elliptic and hyperbolic elements in $\Gamma_n$. For $\gamma \in \Gamma$ let $\mathcal F_\gamma$ be a fundamental domain for the centraliser $Z_\gamma$ of $\gamma$ in $\Gamma$ and $\mathcal F_\Gamma^{\ge \eps}$ the part of it on which no non-trivial element of $\Gamma$ displaces by less than $\eps$. The proof of the Selberg trace formula then gives that
\begin{multline} \label{trace_formula}
  \int_{(O_n)_{\ge\eps}} \tr e^{-t\Delta^1[O_n]}(x, x) dx  = \vol(O_n)_{\ge\eps}\tr e^{-t\Delta^1[\HH^3]} \\ + \sum_{[\gamma] \in \mathcal C_{n, e} \cup \mathcal C_{n, h}} \int_{\mathcal F_\gamma^{\ge\eps}} \tr(\gamma^* e^{-t\Delta^1[\HH^3]} (x, \gamma x)) dx.
\end{multline}
Because of Benjamini--Schramm convergence we have $\vol O_n - \vol(O_n)_{\ge\eps} = o(\vol O_n)$. Then \eqref{lim_hk_thick} will follow from \eqref{trace_formula} together with the following limit: 
\begin{equation} \label{vanishing_contrib}
  \sum_{[\gamma] \in \mathcal C_{n, e} \cup \mathcal C_{n, h}} \int_{\mathcal F_\gamma^{\ge\eps}} \tr(\gamma^* e^{-t\Delta^1[\HH^3]} (x, \gamma x)) dx = o(\vol O_n).
\end{equation}
We proceed to prove (\ref{vanishing_contrib}). The proof for the hyperbolic part is exactly the same as in \cite[Section 7]{7samurai}.

We deal now with the elliptic part; to simplify notation we cheat slightly by integrating over the part of $\mathcal F_\gamma$ where elliptic elements in $Z_\gamma$ translate by at least $\eps$, which we will continue to denote by $\mathcal F_\gamma^{\ge\eps}$ (note that it is larger than what we denoted by $\mathcal F_\gamma^{\ge \eps}$ above). If $[\gamma]$ is an elliptic conjugacy class we let $\theta_\gamma$ be its rotation angle and $\ell_\gamma$ the minimal translation length of an hyperbolic isometry in $Z_\gamma$. Then we have by integrating in polar coordinates around the axis of $\gamma$ that
\[
\int_{\mathcal F_\gamma^{\ge\eps}} \tr(\gamma^* e^{-t\Delta^1[\HH^3]} (x, \gamma x)) dx = \theta_\gamma\ell_\gamma \int_{\max(\ell(\theta_\gamma, \eps), r(\ell_\gamma, \eps))}^{+\infty}  f_\theta(r) dr
\]
where $f_\theta(r) = \sinh(r)\cosh(r)\tr(\gamma^*e^{-t\Delta[\HH^3]}(x, \gamma x))$ for a point $x$ at distance $r$ from the axis. This is a consequence of desintegration of hyperbolic volume in cylindrical coordinates \cite[p. 205]{fenchel1}. Let $\Sigma_n$ be the set of singular geodesics in $O_n$ (so each is the image of an axis of an elliptic conjugacy class in $\Gamma_n$). If $\gamma$ is an elliptic isometry of order $m$, primitive in $\Gamma$, there are $m-1$ elliptic elements in $Z_\gamma$ sharing the same axis. So we get that
\[
\sum_{[\gamma] \in \mathcal C_{n, e}} \int_{\mathcal F_\gamma^{\ge\eps}} \tr(\gamma^* e^{-t\Delta^1[\HH^3]} (x, \gamma x)) dx = \sum_{c \in \Sigma} 2\pi \ell_c \frac{o_c - 1}{o_c} \int_{\max(\ell(\theta_\gamma, \eps), r(\ell_\gamma, \eps))}^{+\infty} f_{2\pi/o_c}(r) dr
\]
where $\ell_c$ is the length of $c$ and $2\pi/o_c$ its cone angle. By the Gaussian estimate of the heat kernel of $\HH^3$ we have that
\[
f_{2\pi/o_c}(r) \ll C(t)e^{-c(t)r^2}
\]
uniformly for $r \ge \ell(\theta_\gamma, \eps)$ and it follows that
\[
\sum_{[\gamma] \in \mathcal C_{n, e}} \int_{\mathcal F_\gamma^{\ge\eps}} \tr(\gamma^* e^{-t\Delta^1[\HH^3]} (x, \gamma x)) dx \ll \sum_{c \in \Sigma_n} \ell_c
\]
and the right-hand side is an $o(\vol O_n)$ by Benjamini--Schramm convergence.

%%%%%%%%%%%%%%%%%%%%%%%%%%%%%%

\subsection{Comparison between heat kernels}

We prove here that
\begin{equation} \label{comphk_thick}
  \lim_{R \to +\infty} \limsup_{n \to +\infty} \frac 1 {\vol O_n} \int_{(O_n)_{\ge R}} \tr (e^{-t\Delta^1[O_n]} - e^{-t\Delta^1[O_n']})(x, x) dx = 0. 
\end{equation}
To do this we let $U_n$ be the subset of $\HH^3$ covering $O_n'$ and choose a fundamental domain $D_n$ for $\Gamma$ acting in the subset of $U_n$ covering $(O_n)_{\ge R}$ (we assume $R$ is large enough so that $(O_n)_{\ge R} \subset O_n'$). Then we can write
\begin{align*}
  \int_{(O_n)_{\ge R}} \tr (e^{-t\Delta^1[O_n]} - e^{-t\Delta^1[O_n']})(x, x) dx &= \int_{D_n} \sum_{\gamma\in \Gamma} \tr\gamma^*(e^{-t\Delta^1[\HH^3]} - e^{-t\Delta[U_n]})(x, \gamma x) dx \\
  &\ll e^{-\frac{R^2}{Ct}} \int_{D_n} \sum_{\gamma\in \Gamma} e^{-\frac{d(x, \gamma x)^2}{Ct}} dx
\end{align*}
where the second line follows from \cite[Theorem 2.26]{Lueck_Schick}. By the same arguments as used above to demonstrate \eqref{lim_hk_thick} the integral is $O(\vol O_n)$ (with a constant independent of $R$ as the domain of integration shrinks when we take $R$ to infinity). In the end we get that
\[
\limsup_{n \to +\infty} \frac 1 {\vol O_n} \int_{(O_n)_{\ge R}} \tr (e^{-t\Delta^1[O_n]} - e^{-t\Delta^1[O_n']})(x, x) dx \ll e^{-\frac{R^2}{Ct}}
\]
from which \eqref{comphk_thick} follows immediately. 

%%%%%%%%%%%%%%%%%%%%%%%%%%%%%%

\subsection{Heat kernel near the boundary}

Here we prove the final ingredient for the proof of Proposition \ref{limit_hk}: for all $R > 0$ we have 
\begin{equation} \label{near_bd}
  \int_{O_n' \setminus (O_n)_{\ge R}} \tr e^{-t\Delta^1[O_n']}(x, x) dx = o(\vol O_n). 
\end{equation}
By Benjamini--Schramm convergence we have that $\vol(O_n' \setminus (O_n)_{\ge R}) = o(\vol O_n)$. So to prove \eqref{comphk_thick} it suffices to see that $\tr e^{-t\Delta^1[O_n']}(x, x) = O_t(1)$ for $x \in O_n'$. As in \cite[(7.19.4)]{7samurai} this follows from \cite[Theorem 2.35]{Lueck_Schick}; the latter is applicable with a uniform constant in our context by Lemma \ref{smoothing}.

%%%%%%%%%%%%%%%%%%%%%%%%%%%%%%%%%%%%%%%%%%%%%%%%%%%%%%%%%%%%%%%%%%%%%%%%%%%%%%%%

\appendix

\section{Proof of Lemma \ref{dist_trans}} \label{proof_lemma}

  Let $x, y \in X$. As $\gamma$ is hyperbolic there exists $a, c$ such that $L = \langle \gamma\rangle x$ is a $(c, a)$-quasi-geodesic. Regarding the conclusion of the proposition it does not change anything if we assume that $x$ is the approximate projection of $y$ on $L$, meaning that any point $x'$ of $L$ within distance $d(y, L)$ of $y$, satisfies $d(x', x) \le K$ (where $K$ depends only on the hyperbolicity constant $\delta$). 

  Let $\ell = d(x, \gamma x)$. Note first that if $k$ is large enough so that
  \begin{equation} \label{k_large}
    k > 100c\ell^{-1}K\log(k) + ac
  \end{equation}
  holds, and $y$ is close enough to $L$ so that
  \begin{equation} \label{y_away}
    d(y, x) > c^2\ell^{-1}\log(k) + cK(2+\log(2+k)) + ca
  \end{equation}
  does not then the conclusion is immediate by the triangle inequality. Thus from now on we will assume that both hold. 
  
  Let $x_i = \gamma^i x$, $y_i = \gamma^i y$ for $0 \le i \le k$. Let $F$ be the finite set
  \[
  F = \{x_0, x_1, \ldots, x_k \} \cup \{ y_0, y_k \};
  \]
  by \cite[Proposition 7.3.1]{Bowditch} there exists a choice of a ``spanning tree'' on $F$ (that is, a tree whose edges are a subset of all pairs of geodesics segment between points of $F$) such that
  \begin{equation} \label{spanning_tree}
    \forall p, q \in F :\: d(p, q) \ge d_{T_F}(p, q) - (1 + \log(2+k))K
  \end{equation}
  where $K$ depends only on $\delta$ (so we take it equal to the $K$ introduced above to simplify notation). One of $y_0, y_k$ must be connected to one of the $x_i$ in $T_F$; we may assume that $[y_0, x_i]$ is an edge in $T_F$ for some $i$. We claim that this $i$ must be unique, and we must have
  \begin{equation} \label{i_small}
    i < c\ell^{-1}\left( (\log(k+2) + 2)K +a \right). 
  \end{equation}
  Indeed, let $i$ be the smallest integer such that $[x_i, y_0] \subset T_F$. Then, because
  \[
  d_{T_F}(x_0, y_0) \le d(x_0, y_0) + (\log(k+2) + 1)K
  \]
  and
  \[
  d_{T_F}(x_0, y_0) \ge d(x_0, x_i) + d(x_i, y_0) \ge \frac{i \ell} c - a + d(x_0, y_0) - K
  \]
  we see that $i$ must verify \eqref{i_small}. Now assume that there is a $j > i$ such that $[x_j, y_0] \subset T_F$, and take it to be the smallest such; we want to reach a contradiction. %By an argument similar to the above, and using \eqref{y_away} we have that \[ |j - i| \ge c\ell^{-1}\log(k) + a + K\]
  Consider $i \le l < j$ to be maximal such that the path in $T_F$ from $x_l$ to $x_i$ does not go through $y_0$. Then the path in $T_F$ from $x_l$ to $x_{l+1}$ must go through $y_0$ (otherwise we would have a path from $x_{l+1}$ to $x_i$ via $x_l$ avoiding $y_0$). We have thus $d_{T_F}(x_l, x_{l+1}) \ge d(x_0, y_0) - K$ which together with \eqref{y_away} and \eqref{spanning_tree} contradicts the fact that $d(x_l, x_{l+1}) = \ell$. 

  We now want to prove that $[y_0, y_k]$ is not an edge in $T_F$. To do so we must consider two possibilities. Assume first that $[y_k, x_j] \subset T_F$ for some $j$. Then reasoning as above we see that $j$ is the only such index, and $j > k - c\ell^{-1}\left( (\log(k+2) + 2)K +a \right) > i$. In this case we reach a contradiction in the same way as in the previous paragraph: considering a maximal $i \le l < j$ such that the path from $x_l$ to $x_i$ does not go through $y_0$ we see that $d_{T_F}(x_l, x_{l+1})$ is too large.

  If there is no edge $[y_k, x_j]$ in $T_F$ then the path from $x_k$ to $y_k$ must go first to $x_i$, then to $y_0$ and finally to $y_k$. But as $d(x_k, x_i) > (\log(k+2) + 1)K$ by \eqref{i_small} and \eqref{k_large} we see that this contradicts $d(x_0, y_0) = d(x_k, y_k)$. 

  So we get that there must be a unique edge $[y_k, x_j]$ in $T_F$, and the path in $T_F$ from $y_0$ to $y_k$ must go through $x_j$ and $x_i$. As before we must have
  \[
  j > k - c\ell^{-1}\left( (\log(k+2) + 2)K +a \right)
  \]
  and we finally get using first \eqref{spanning_tree}, then the fact that $(x_0, \ldots, x_k)$ is a quasi-geodesic, and finally the above together with \eqref{i_small} that:
  \begin{align*}
    d(y_0, y_k) &\ge d(y_0, x_i) + d(x_i, x_j) + d(x_j, y_k) - K - K\log(2 + k)\\
    &\ge 2d(x_0, y_0) + c^{-1}(j-i)\ell - a - 3K - K\log(2 + k) \\
    &\ge 2d(x_0, y_0) + c^{-1}\ell k - B - b\log(k)
  \end{align*}
  where $B, b$ depend only on $x, \gamma, \delta$. From the last inequality and \eqref{k_large} the conclusion is immediate.

%%%%%%%%%%%%%%%%%%%%%%%%%%%%%%

\section{Smoothing corners} \label{appendix_smooth}

In this appendix we prove Proposition \ref{general_smoothing}; as the argument is technical but has no subtleties we will be quite sketchy in presenting it. 

Recall that we have the following situation: $X$ is a manifold with bounded geometry, $H_1, H_2 \subset X$ such that $X \setminus H_i$ both have bounded geometry, meet transversally and the dihedral angle between them is bounded away from 0 and $\pi$. We remark that constructing a smoothing of $Y = X \setminus (H_1 \cup H_2)$ satisfying the conclusions of Proposition \ref{general_smoothing} is immediate in the case where the intersection $I = H_1 \cap H_2$ has a neighbourhood in $Y$ which is isometric to the product $[0, \delta[^2 \times I$. In general we will prove the following statement: there exists a diffeomorphism $\varphi$ from $[0, \delta[^2 \times I$ to a neighbourhood of $I$ in $Y$ such that $\varphi$ and $\varphi^{-1}$ have all their derivatives uniformly bounded. In view of the preceding remark this proves the proposition.

To define $\varphi$ we need some more auxiliary notation: for a vector field $V$ and $t \ge 0$ we let $\Phi_V^t$ be its flow at time $t$; if $H \subset Z$ is open with smooth boundary we denote by $N_H^Z$ the normal field of $H$ in $Z$. We put: 
\[
\varphi_1(x, t, s) = \Phi_{N_{H_1}^X}^t(\Phi_{N_I^{H_1}}^s(x))
\]
and
\[
\varphi_2(x, t, s) = \Phi_{N_{H_2}^X}^s(\Phi_{N_I^{H_2}}^t(x))
\]
We fix a smooth non-decreasing function $h : \RR \to [0, 1[$ such that $h$ is zero on negative numbers, and at infinity it tends to 1 and all its derivatives vanish at all orders. Let $0 < a <1$ such that the convex hull of all $\varphi_1(x, t, s)$ and $\varphi_2(x, t, s)$ for $as \le t \le a^{-1}s$ is contained in $Y$. For $x, y \in X$ and $u \in [0, 1]$ let $ux + (1-u)y$ denote the barycenter of $x, y$ on the geodesic segment between them\footnote{This is well-defined for those pairs of points in $X$ that we consider, as long as we take $\delta \ll \mathrm{inj}(X)$}. With this notation we define:
\[
\varphi(x, t, s) = h\left( \frac{at - s}{as - t}\right) \varphi_1(x, t, s) + \left(1 - h\left( \frac{at - s}{as - t}\right) \right) \varphi_2(x, t, s)
\]
and we claim that $\varphi$ has the desired properties. It is smooth as a composition of smooth maps. To deduce the remaining properties we will use the following lemma.

\begin{lem} \label{angle}
  For $i=1, 2$ there is $c$ depending only on the bounds on the geometry of $H_i$ such that the following properties hold.
  \begin{enumerate}
  \item \label{angle2} Let $z \in \pl H_i$ and $0 \le t \le \delta$. The linear map $D_z\Phi_{N_{H_i}^X}^t$ is $c$-Lipschitz on angles. The same holds for $x \in I$ and $D_x\Phi_{N_I^{H_i}}^t$. 

  \item \label{angle1} For all $x \in I$ and all $0 \le s, t < \delta$, let $y = \Phi_{N_{H_i}^X}^t(\Phi_{N_I^{H_i}}^s(x))$. Let $\gamma$ be the geodesic (in $X$) from $x$ to $y$, $u_i$ the parallel transport along $\gamma$ of the outward normal vector to $H_i$ at $x$ and $v_i = \left. \frac{\pl}{\pl\tau} \right|_{\tau=t} \Phi_{N_{H_i}^X}^{\tau}(\Phi_{N_I^{H_i}}^s(x))$. Then the angle between $u_i$ and $v_i$ is at most $c\delta$.
  \end{enumerate}
\end{lem}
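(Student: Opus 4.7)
The plan is to prove the two parts separately, each via standard Riemannian-geometric tools applied to the bounded-geometry data of $X$ and of $\pl H_i$.

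For part \eqref{angle2} I would identify $D_z\Phi_{N_{H_i}^X}^t(w)$ with the value at time $t$ of the Jacobi field $J_w$ along the normal geodesic $\tau \mapsto \Phi_{N_{H_i}^X}^\tau(z)$ characterised by $J_w(0) = w$ and $J_w'(0) = -S_z(w^\top)$, where $S_z$ is the shape operator of $\pl H_i$ at $z$ and $w^\top$ is the component of $w$ tangent to $\pl H_i$. The bounded-geometry hypotheses on $X$ and on $\pl H_i$ give uniform bounds on the curvature tensor of $X$ and on $\|S\|$, so Gronwall's inequality applied to the Jacobi equation yields two-sided estimates $c^{-1}\|w\| \le \|J_w(t)\| \le c\|w\|$, uniformly for $t \in [0, \delta]$. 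From these operator-norm bounds on $D\Phi^t$ and its inverse, the Lipschitz control of angles is immediate. The statement for $D_x\Phi_{N_I^{H_i}}^t$ on $I \subset \pl H_i$ is identical, applied inside $\pl H_i$ instead of $X$.

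For part \eqref{angle1}, set $p := \Phi_{N_I^{H_i}}^s(x) \in \pl H_i$, so that $x$, $p$, $y$ are the vertices of a geodesic ``triangle'' whose three sides have length at most $\delta$. By construction $v_i$ is the parallel transport along $[p, y]$ of the outward normal $N_{H_i}^X(p)$. Introduce the auxiliary vector $\tilde u_i$ obtained by parallel-transporting $N_{H_i}^X(x)$ first along $[x, p]$ inside $\pl H_i$ and then along $[p, y]$ in $X$; the comparison $u_i$ vs.\ $v_i$ then splits in two pieces. First, $u_i$ and $\tilde u_i$ are parallel transports of the same initial vector $N_{H_i}^X(x)$ along the two sides $\gamma$ and $[x, p] \cup [p, y]$ of the triangle, so their difference is the holonomy around a geodesic triangle of perimeter $O(\delta)$, which by the standard second-variation estimate is $O(\delta^2)$. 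Second, $\tilde u_i$ and $v_i$ are both parallel-transported along the common segment $[p, y]$, starting from the parallel transport of $N_{H_i}^X(x)$ along $[x, p]$ and from $N_{H_i}^X(p)$ respectively. The discrepancy at $p$ equals the integral $\int_0^s \nabla_{\dot c} N_{H_i}^X\, d\tau$ (parallel-transported suitably back to a common tangent space), where $c$ parametrises $[x, p]$ by arc length; the integrand is, up to sign, the shape operator of $\pl H_i$ applied to $\dot c$, hence uniformly bounded, and the integral is $O(\delta)$. Since parallel transport is an isometry, this discrepancy persists at $y$. Summing the two contributions gives $\|u_i - v_i\| = O(\delta)$, whence the angle between these two unit vectors is also $O(\delta)$.

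The main technical ingredient is the holonomy estimate in the first step, which requires $\delta$ to be small enough compared to the injectivity radius of $X$ so that the geodesic triangle bounds an embedded disk of area $O(\delta^2)$; the bounded-geometry assumption on $X$ guarantees this is possible. All constants that arise depend only on the curvature bound for $X$ and on the bounds on the second fundamental forms of $\pl H_i$ and of $I$ inside $\pl H_i$, consistent with the statement of the lemma.
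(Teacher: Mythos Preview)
Your proof is correct and follows essentially the same route as the paper. The paper's argument is extremely terse: for \eqref{angle2} it simply invokes the bounded-geometry hypothesis directly (bounds on the metric and its inverse in normal exponential coordinates), whereas you rederive these bounds via Jacobi fields and Gronwall; for \eqref{angle1} the paper says only that it ``follows from \eqref{angle2}, together with the fact that parallel transport along a closed curve stays close to the identity within the $\delta$-neighbourhood,'' which is exactly your holonomy-plus-shape-operator decomposition made explicit.
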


\begin{proof}
  \eqref{angle2} follows from the boundedness of coefficients of the metric tensor and its inverse in normal exponential coordinates (in both $I \subset H_i$ and $\pl H_i \subset X$). \eqref{angle1} follows from \eqref{angle2}, together with the fact that parallel transport along a closed curve stays close to the identity within the $\delta$-neighbourhood. 
\end{proof}

Let $V_i$ be the vector fields given by the vectors $v_i$ defined in the lemma. As for any $x \in I$ we have that the angle between $V_1(x)$ and $V_2(x)$ lies in $[\alpha_0, \pi - \alpha_0]$ it follows from \eqref{angle1} that if we choose $\delta < c^{-1}\alpha_0/2$ we have that the angle between $V_1$ and $V_2$ at any point $x$ in the $\delta$-neighbourhood of $I$ lies in $[\alpha_0/2, \pi - \alpha_0/2]$. In particular $V_1, V_2$ define a plane field, and we define $J$ to be its orthogonal.

Let $\pi_J$ be orthogonal projection on $J$. The block decomposition of $D\varphi$ according to $TX = J \oplus (V_1+V_2)$ is:
\[
D_{(x, t, s)}\varphi = \begin{pmatrix} \pi_JD_x\varphi & C \\(1-\pi_J)D_x\varphi & B \end{pmatrix}. 
\]
We need to prove that: 
\begin{enumerate}
\item $D_x\varphi, B$ and $C$ have bounded coefficients (in terms of the bounds on the geometry);

\item $\pi_J D_x\varphi$ and $B$ are everywhere invertible and their inverses are bounded.
\end{enumerate}
Indeed, this shows that the map $\varphi$ has a derivative which everywhere invertible. In particular, it is a local diffeomorphism and as it is the identity on $I$ it is also a global diffeomorphism. This also implies that its derivative is uniformly bounded in terms of the geometry of $H_i$ and $\alpha_0$, and so is its inverse. 

\medskip

We deal first with $D_x\varphi$. We note that
\[
(D_x \varphi)_{(x, t, s)} = h\left( \frac{at - s}{as - t}\right) D_x\varphi_1(x, t, s) + \left(1 - h\left( \frac{at - s}{as - t}\right) \right) D_x\varphi_2(x, t, s) + O(\delta)
\]
because of bounded geometry and the fact that to obtain $\varphi$ we move $\varphi_1$ and $\varphi_2$ by at most $\delta$. It follows that $D_x\varphi$ is bounded. By point \eqref{angle2} of the Lemma we have that at all points the angle between the image of $D_x\varphi$ and $V_i$ is at most $c\delta$; it follows that $\|(1-\pi_J)D_x\varphi\| \ll \delta$. Moreover $D_x\varphi$ is everywhere invertible with bounded inverse, because both $A_1 = D_x\varphi_1$ and $A_2 = D_x\varphi_2$ are, and for $w \in T_xI$ the vectors $A_1(w), A_2(w)$ have an angle $\le c\delta$ between them by \eqref{angle2}.  

We also have
\[
D_t\varphi = h\left( \frac{at - s}{as - t}\right) D_t\varphi_1(x, t, s) + \left(1 - h\left( \frac{at - s}{as - t}\right) \right) D_t\varphi_2(x, t, s) + O(\delta)
\]
and similarly for $D_s\varphi$, so the coefficients of $B, C$ are bounded. 

It remains to prove that $B$ is invertible and $\det(B)$ is bounded away from zero. At a point $x \in I$ we have $D_t\varphi$ and $D_s\varphi$ belong to two disjoint open convex cones in $T_xX/J_x$; by \eqref{angle1} and \eqref{angle2} this remains true in the $\delta$-neighbourhood and the angle between the cones remains bounded away from zero, hence the matrix $B$ is invertible with uniformly bounded inverse.

%%%%%%%%%%%%%%%%%%%%%%%%%%%%%%%%%%%%%%%%%%%%%%%%%%%%%%%%%%%%%%%%%%%%%%%%%%%%%%%%

\bibliographystyle{plain}
\bibliography{bib}

\begin{thebibliography}{10}

\bibitem{7samurai}
Miklos Abert, Nicolas Bergeron, Ian Biringer, Tsachik Gelander, Nikolay
  Nikolov, Jean Raimbault, and Iddo Samet.
\newblock On the growth of {$L^2$}-invariants for sequences of lattices in
  {L}ie groups.
\newblock {\em Ann. of Math. (2)}, 185(3):711--790, 2017.

\bibitem{Beardon}
Alan~F. Beardon.
\newblock {\em The geometry of discrete groups}, volume~91 of {\em Graduate
  Texts in Mathematics}.
\newblock Springer-Verlag, New York, 1983.

\bibitem{BMP}
Michel Boileau, Sylvain Maillot, and Joan Porti.
\newblock {\em Three-dimensional orbifolds and their geometric structures},
  volume~15 of {\em Panoramas et Synth\`eses [Panoramas and Syntheses]}.
\newblock Soci\'{e}t\'{e} Math\'{e}matique de France, Paris, 2003.

\bibitem{BHC}
Armand Borel.
\newblock Arithmetic subgroups of algebraic groups.
\newblock {\em Annals of mathematics}, pages 485--535, 1962.

\bibitem{Bowditch}
B.~H. Bowditch.
\newblock Notes on {G}romov's hyperbolicity criterion for path-metric spaces.
\newblock In {\em Group theory from a geometrical viewpoint ({T}rieste, 1990)},
  pages 64--167. World Sci. Publ., River Edge, NJ, 1991.

\bibitem{Bridson_Haefliger}
Martin~R. Bridson and Andr{\'e} Haefliger.
\newblock {\em Metric spaces of non-positive curvature}, volume 319 of {\em
  Grundlehren der Mathematischen Wissenschaften}.
\newblock Springer-Verlag, 1999.

\bibitem{Clozel_tau}
Laurent Clozel.
\newblock D\'{e}monstration de la conjecture {$\tau$}.
\newblock {\em Invent. Math.}, 151(2):297--328, 2003.

\bibitem{dlHarpe1}
Pierre de~la Harpe.
\newblock Spaces of closed subgroups of locally compact groups.
\newblock {\em ArXiv e-prints}, jul 2008.

\bibitem{fenchel1}
Werner Fenchel.
\newblock {\em Elementary Geometry in Hyperbolic Space}, volume~11.
\newblock Walter de Gruyter, 1989.

\bibitem{fraczyk}
M.~{Fraczyk}.
\newblock {Strong Limit Multiplicity for arithmetic hyperbolic surfaces and
  $3$-manifolds}.
\newblock {\em ArXiv e-prints}, December 2016.

\bibitem{Gelander_ICM}
Tsachik Gelander.
\newblock A view on invariant random subgroups and lattices, 2018.
\newblock To appear in proceedings of ICM 2018.

\bibitem{Gromov_hyp}
M.~Gromov.
\newblock Hyperbolic groups.
\newblock In {\em Essays in group theory}, volume~8 of {\em Math. Sci. Res.
  Inst. Publ.}, pages 75--263. Springer, New York, 1987.

\bibitem{Halpern}
Noemi Halpern.
\newblock A proof of the collar lemma.
\newblock {\em Bull. London Math. Soc.}, 13(2):141--144, 1981.

\bibitem{Kazhdan1977}
David Kazhdan.
\newblock Some applications of the {W}eil representation.
\newblock {\em J. Analyse Mat.}, 32:235--248, 1977.

\bibitem{klenke}
Achim Klenke.
\newblock {\em Probability theory}.
\newblock Universitext. Springer, London, second edition, 2014.
\newblock A comprehensive course.

\bibitem{levit}
Arie Levit.
\newblock On the benjamini-schramm limit of congruence subgroups in products.
\newblock {\em preprint}, 2017.

\bibitem{Li_Millson}
Jian-Shu Li and John~J. Millson.
\newblock On the first {B}etti number of a hyperbolic manifold with an
  arithmetic fundamental group.
\newblock {\em Duke Math. J.}, 71(2):365--401, 1993.

\bibitem{LMR_genus_zero}
D.~D. Long, C.~Maclachlan, and A.~W. Reid.
\newblock Arithmetic {F}uchsian groups of genus zero.
\newblock {\em Pure Appl. Math. Q.}, 2(2, Special Issue: In honor of John H.
  Coates. Part 2):569--599, 2006.

\bibitem{Lueck_Schick}
W.~L{\"u}ck and T.~Schick.
\newblock {$L^2$}-torsion of hyperbolic manifolds of finite volume.
\newblock {\em Geom. Funct. Anal.}, 9(3):518--567, 1999.

\bibitem{Lueck_book}
Wolfgang L{\"u}ck.
\newblock {\em {$L^2$}-invariants: theory and applications to geometry and
  {$K$}-theory}, volume~44 of {\em Ergebnisse der Mathematik und ihrer
  Grenzgebiete. 3. Folge. A Series of Modern Surveys in Mathematics [Results in
  Mathematics and Related Areas. 3rd Series. A Series of Modern Surveys in
  Mathematics]}.
\newblock Springer-Verlag, Berlin, 2002.

\bibitem{Matz}
Jasmin Matz.
\newblock Limit multiplicities for {$\mathrm{PSL}_2(\mathcal O_F)$} in
  {$\mathrm{PSL}_2(\mathbb R^{r_1} \oplus \mathbb C^{r_2}$}.
\newblock {\em Arxiv e-prints}, 2017.

\bibitem{Millson}
John~J. Millson.
\newblock On the first {B}etti number of a constant negatively curved manifold.
\newblock {\em Ann. of Math. (2)}, 104(2):235--247, 1976.

\bibitem{Osin}
D.~Osin.
\newblock Invariant random subgroups of groups acting on hyperbolic spaces.
\newblock {\em Proc. Amer. Math. Soc.}, 145(8):3279--3288, 2017.

\bibitem{raimbault}
Jean Raimbault.
\newblock On the convergence of arithmetic orbifolds.
\newblock {\em Ann. Inst. Fourier (Grenoble)}, 67(6):2547--2596, 2017.

\bibitem{Shimura}
Goro Shimura.
\newblock {\em Introduction to the arithmetic theory of automorphic functions}.
\newblock Publications of the Mathematical Society of Japan, No. 11. Iwanami
  Shoten, Publishers, Tokyo; Princeton University Press, Princeton, N.J., 1971.
\newblock Kan\^{o} Memorial Lectures, No. 1.

\bibitem{Thompson}
J.~G. Thompson.
\newblock A finiteness theorem for subgroups of {${\rm PSL}(2,\,{\bf R})$}
  which are commensurable with {${\rm PSL}(2,\,{\bf Z})$}.
\newblock In {\em The {S}anta {C}ruz {C}onference on {F}inite {G}roups ({U}niv.
  {C}alifornia, {S}anta {C}ruz, {C}alif., 1979)}, volume~37 of {\em Proc.
  Sympos. Pure Math.}, pages 533--555. Amer. Math. Soc., Providence, R.I.,
  1980.

\bibitem{Zograf}
P.~Zograf.
\newblock A spectral proof of {R}ademacher's conjecture for congruence
  subgroups of the modular group.
\newblock {\em J. Reine Angew. Math.}, 414:113--116, 1991.

\end{thebibliography}

\end{document}